\documentclass[11pt, notitlepage]{article}   


\usepackage{amsmath,amsthm,amsfonts}   

\usepackage{amscd}
\usepackage{amsfonts}
\usepackage{amssymb}
\usepackage{color}      
\usepackage{epsfig}
\usepackage{graphicx}           

\usepackage{epsfig}
\include{psfig}

\theoremstyle{plain}
\newtheorem{Thm}{Theorem}[section]
\newtheorem{Lem}[Thm]{Lemma}
\newtheorem{Crlr}[Thm]{Corollary}
\newtheorem{Prop}[Thm]{Proposition}
\newtheorem{Obs}[Thm]{Observation}
\newtheorem{Rem}[Thm]{Remark}

\theoremstyle{definition}

\newtheorem{Eg}{Example}


\errorcontextlines=0
\def\finf{\mathop{{\rm I}\kern -.27 em {\rm F}}\nolimits}


\textwidth 6in \textheight 8.8in \topmargin -0.5in \oddsidemargin
0.3in \evensidemargin 0.3in
\parindent=0pt

\begin{document}

\date{}

\title{A Comparison between the Zero Forcing Number\\ and the Strong Metric Dimension of Graphs}

\author{{\bf Cong X. Kang}$^1$ and {\bf Eunjeong Yi}$^2$\\
\small Texas A\&M University at Galveston, Galveston, TX 77553, USA\\
{\small\em kangc@tamug.edu}$^1$; {\small\em yie@tamug.edu}$^2$}

\maketitle

\begin{abstract}
The \emph{zero forcing number}, $Z(G)$, of a graph $G$ is the minimum cardinality of a set $S$ of black vertices (whereas vertices in $V(G)-S$ are colored white) such that $V(G)$ is turned black after finitely many applications of ``the color-change rule": a white vertex is converted black if it is the only white neighbor of a black vertex. The \emph{strong metric dimension}, $sdim(G)$, of a graph $G$ is the minimum among cardinalities of all strong resolving sets: $W \subseteq V(G)$ is a \emph{strong resolving set} of $G$ if for any $u, v \in V(G)$, there exists an $x \in W$ such that either $u$ lies on an $x-v$ geodesic or $v$ lies on an $x-u$ geodesic. In this paper, we prove that $Z(G) \le sdim(G)+3r(G)$ for a connected graph $G$, where $r(G)$ is the cycle rank of $G$. Further, we prove the sharp bound $Z(G) \leq sdim(G)$ when $G$ is a tree or a unicyclic graph, and we characterize trees $T$ attaining $Z(T)=sdim(T)$. It is easy to see that $sdim(T+e)-sdim(T)$ can be arbitrarily large for a tree $T$; we prove that $sdim(T+e) \ge sdim(T)-2$ and show that the bound is sharp. 
\end{abstract}

\noindent\small {\bf{Keywords:}} zero forcing number, strong metric dimension, cycle rank, tree, unicyclic graph\\
\small {\bf{2010 Mathematics Subject Classification:}} 05C12, 05C50, 05C05, 05C38\\


\section{Introduction}

Let $G = (V(G),E(G))$ be a finite, simple, undirected, and connected graph of order $|V(G)| \ge 2$. The \emph{path cover number}, $P(G)$, of $G$ is the minimum number of vertex disjoint paths, occurring as induced subgraphs of $G$, that cover all the vertices of $G$. The \emph{degree} $\deg_G(v)$ of a vertex $v \in V(G)$ is the number of edges incident to the vertex $v$ in $G$; a \emph{leaf} (or \emph{pendant}) is a vertex of degree one. We denote the number of leaves of $G$ by $\sigma(G)$. For $S \subseteq V(G)$, we denote by $\langle S \rangle$ the subgraph induced by $S$. The \emph{distance} between two vertices $u, v \in V(G)$, denoted by $d_G(u, v)$, is the length of a shortest path in $G$ between $u$ and $v$. We omit $G$ when ambiguity is not a concern.\\

The notion of a zero forcing set, as well  as the associated zero forcing number, of a simple graph was introduced by the aforementioned ``AIM group" in~\cite{AIM} to bound the minimum rank of graphs. Let each vertex of a graph $G$ be given one of two colors, dubbed ``black" and ``white" by convention. Let $S$ denote the (initial) set of black vertices of $G$. The \emph{color-change rule} converts the color of a vertex from white to black if the white vertex $u_2$ is the only white neighbor of a black vertex $u_1$; we say ``$u_1$ forces $u_2$" in this case. The set $S$ is said to be \emph{a zero forcing set} of $G$ if all vertices of $G$ will be turned black after finitely many applications of the color-change rule. The \emph{zero forcing number}, $Z(G)$, of $G$ is the minimum of $|S|$, as $S$ varies over all zero forcing sets of $G$.\\ 

Since its introduction by the ``AIM group", zero forcing number has become a graph parameter studied for its own sake, as an interesting invariant of a graph. For example, for discussions on the number of steps it takes for a zero forcing set to turn the entire graph black (the graph parameter has been named the \emph{iteration index} or the \emph{propagation time} of a graph), see \cite{iteration} and \cite {proptime}. In~\cite{pzf}, a probabilistic interpretation of zero forcing in graphs is introduced. It's also noteworthy that physicists have independently studied the zero forcing parameter, referring to it as the \emph{graph infection number}, in conjunction with the control of quantum systems (see \cite{p1}, \cite{p2}, and \cite{p3}).\\ 

A vertex $x \in V(G)$ \emph{resolves} a pair of vertices $u,v \in V(G)$ if $d(u,x) \neq d(v,x)$. A vertex $x \in V(G)$ \emph{strongly resolves} a pair of vertices $u,v \in V(G)$ if $u$ lies on an $x-v$ geodesic or $v$ lies on an $x-u$ geodesic. A set of vertices $W \subseteq V(G)$ \emph{(strongly) resolves} $G$ if every pair of distinct vertices of $G$ is (strongly) resolved by some vertex in $W$; then $W$ is called a \emph{(strong) resolving set} of $G$. For an ordered set $W=\{w_1, w_2, \ldots, w_k\} \subseteq V(G)$ of distinct vertices, the \emph{metric representation} of $v \in V(G)$ with respect to $W$ is the $k$-vector $D_G(v | W)=(d(v, w_1), d(v, w_2), \ldots, d(v, w_k))$. The \emph{metric dimension} of $G$, denoted by $dim(G)$, is the minimum among cardinalities of all resolving sets of $G$. The \emph{strong metric dimension} of $G$, denoted by $sdim(G)$, is the minimum among cardinalities of all \emph{strong} resolving sets of $G$.\\

Metric dimension was introduced by Slater \cite{Slater} and, independently, by Harary and Melter \cite{HM}. Applications of metric dimension can be found in robot navigation \cite{landmarks}, sonar \cite{Slater}, combinatorial optimization \cite{MathZ}, and pharmaceutical chemistry \cite{CEJO}. Strong metric dimension was introduced by Seb\"{o} and Tannier \cite{MathZ}; they observed that if $W$ is a strong resolving set, then the vectors $\{D_G(v | W) \mid v \in V(G)\}$ uniquely determine the graph $G$ (also see \cite{fracsdim} for more detail); whereas for a resolving set $U$ of $G$, the vectors $\{D_G(v|U) \mid v \in V(G)\}$ may not uniquely determine $G$. It is noted that determining the (strong) metric dimension of a graph is an NP-hard problem (see \cite{NPcompleteness} and \cite{sdim}).\\

In this paper, we initiate a comparative study between the zero forcing number and the strong metric dimension of graphs. The zero forcing number and the strong metric dimension coincide for paths $P_n$, complete graphs $K_n$, complete bi-partite graphs $K_{s,t}$ ($s+t \ge 3$), for examples; they are $1$, $n-1$, and $s+t-2$, respectively. The Cartesian product of two paths shows that zero forcing number can be arbitrarily larger than strong metric dimension; cycles $C_n$ show that strong metric dimension can be arbitrarily larger than zero forcing number. We prove the sharp bound that $Z(G) \leq sdim(G)$ when $G$ is a tree or a unicyclic graph, and we characterize trees $T$ attaining $Z(T)=sdim(T)$. It is easy to see that $sdim(T+e)-sdim(T)$ can be arbitrarily large for a tree $T$; we prove that $sdim(T+e) \ge sdim(T)-2$ and show that the bound is sharp. In the final section, we show, for any graph $G$ with cycle rank $r(G)$, that $Z(G) \le sdim(G)+3r(G)$ and pose an open problem pertaining to its refinement.


\section{The zero forcing number and the strong metric dimension of trees}

In this section, we show that $Z(T) \le sdim(T)$ for a tree $T$, and we characterize trees $T$ satisfying $Z(T)=sdim(T)$. We first recall some results that will be used here.

\begin{Thm} \label{pathcover}
Let $T$ be a tree. Then
\begin{itemize}
\item[(a)] \cite{AIM} $Z(T) = P(T)$,
\item[(b)] \cite{MathZ} $sdim(T)=\sigma(T)-1$.
\end{itemize}
\end{Thm}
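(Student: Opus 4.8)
The plan is to prove the two equalities separately, each as a pair of matching inequalities; I sketch the strategy for each and indicate where the real work lies.

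For part (a), the inequality $P(T)\le Z(T)$ comes from the standard forcing-chain argument: fix a minimum zero forcing set $S$ and a sequence of forces that colors all of $T$, and note that each vertex forces at most once and is forced at most once, so the forces partition $V(T)$ into $|S|$ chains, each starting at a vertex of $S$ with consecutive members adjacent in $T$. Since $T$ is acyclic each such chain is an \emph{induced} path, so these $Z(T)$ chains form a path cover and $P(T)\le Z(T)$. For the reverse inequality $Z(T)\le P(T)$, I would fix a minimum path cover $\{P^1,\dots,P^k\}$ with $k=P(T)$ and prove that, after a suitable choice of one endpoint of each $P^i$, the resulting set is a zero forcing set, arguing by induction on $|V(T)|$. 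If $T$ is a path then $Z(T)=1=P(T)$; otherwise pick a leaf $\ell$, let $Q$ be the pendant path from $\ell$ up to (but not including) the nearest vertex $v$ of degree at least $3$, and pass to the smaller tree $T'=T-V(Q)$; the idea is to relate a minimum path cover of $T'$ to the given one for $T$, apply the induction hypothesis to $T'$, add $\ell$ to the resulting zero forcing set of $T'$, and check that the black color propagates down $Q$ to $v$ while no other force is blocked. I expect the genuine obstacle of the whole theorem to be precisely this re-attachment step: one must control how deleting $Q$ changes a minimum path cover near $v$ (the path through $v$ may or may not absorb the last edge of $Q$), re-order the forces consistently, and keep all covers minimum.

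For part (b), set $\sigma=\sigma(T)$. For the lower bound $sdim(T)\ge\sigma-1$: a leaf has degree one, so it can never be an interior vertex of a geodesic, and a leaf not in a set $W$ is distinct from every member of $W$; hence if two distinct leaves $\ell_1,\ell_2$ both lay outside a strong resolving set $W$, no $x\in W$ could put $\ell_1$ on an $x$-$\ell_2$ geodesic or $\ell_2$ on an $x$-$\ell_1$ geodesic, a contradiction, so $W$ omits at most one leaf. For the upper bound, let $W$ consist of all leaves but one, say all leaves except $\ell_0$, and verify that $W$ strongly resolves $T$: given distinct $u,v$, prolong the unique $u$-$v$ path past $u$ to a leaf $\ell_u$ and past $v$ to a leaf $\ell_v$ (taking $\ell_u=u$ when $u$ is itself a leaf, and similarly for $v$); then $u$ lies on an $\ell_u$-$v$ geodesic and $v$ lies on an $\ell_v$-$u$ geodesic, and since $T$ is acyclic one checks that $\ell_u$ and $\ell_v$ cannot both be $\ell_0$, so at least one of them lies in $W$. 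The only fuss here is when $u$ or $v$ is a leaf equal to $\ell_0$, which is handled by applying the prolongation to the other vertex. Alternatively, (b) follows at once from the identity $sdim(T)=\beta(T_{SR})$ of Oellermann and Peters-Fransen, where $\beta$ is the vertex cover number and $T_{SR}$ is the strong resolving graph: in a tree the mutually maximally distant pairs are exactly the pairs of leaves, so $T_{SR}$ is a clique on the $\sigma$ leaves together with isolated vertices, and $\beta(T_{SR})=\sigma-1$.
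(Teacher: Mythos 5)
First, a point of reference: the paper does not prove this theorem at all --- both parts are quoted from the literature ((a) from the AIM paper, (b) from Seb\"{o} and Tannier) --- so there is no in-paper argument to compare yours against; your proposal has to stand on its own.

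Part (b) of your proposal is complete and correct. The lower bound via the observation that a leaf can never be an interior vertex of a geodesic (so any strong resolving set omits at most one leaf) and the upper bound by prolonging the $u$--$v$ path past each end to leaves $\ell_u \neq \ell_v$ are exactly the standard argument, and your alternative via $sdim(T)=\beta(T_{SR})$ is also valid, since the mutually maximally distant pairs in a tree are precisely the pairs of distinct leaves. The first half of part (a), $P(T)\le Z(T)$, is likewise complete: the forcing chains of a minimum zero forcing set partition $V(T)$ into $Z(T)$ paths, and every path in a tree is induced (a chord would close a cycle).

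The genuine gap is in the converse $Z(T)\le P(T)$. You state the strategy (induct on $|V(T)|$, peel off the pendant path $Q$ from a leaf $\ell$ up to the nearest vertex $v$ of degree at least $3$) but leave the re-attachment step open, and the step as you literally describe it --- ``apply the induction hypothesis to $T'=T-V(Q)$, add $\ell$ to the resulting zero forcing set of $T'$'' --- fails whenever $P(T')=P(T)$, which does happen. For example, let $T$ consist of adjacent vertices $v$ and $w$, with leaves $a,b$ attached to $v$ and $c,d$ attached to $w$: here $Z(T)=P(T)=2$, but deleting the leaf $a$ (which is all of $Q$) leaves $P(T')=2$, so ``IH plus $\ell$'' produces a set of size $3$. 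What actually has to be proved is that one endpoint can be chosen from each path of a minimum cover of $T$ itself (in the example, the path $a$--$v$--$b$ contributes $a$, rather than $a$ being added on top of a forcing set of $T'$), together with the ordering argument that the black color first runs down $Q$ into $v$ before any force emanating from $v$ in $T'$ is executed, and a case analysis according to whether the cover path through $\ell$ terminates at $v$ or continues past it. You correctly identify this as the crux, but flagging the crux is not the same as resolving it; as written, part (a) is only half proved.
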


\begin{Thm}\cite{cutvertex} \label{cutV}
Let $G$ be a graph with cut-vertex $v \in V(G)$. Let $V_1, V_2, \ldots, V_k$ be the vertex sets for the connected components of $\langle V(G)- \{v\}\rangle$, and for $1\le i \le k$, let $G_i$ = $\langle V_i \cup \{v\}\rangle$. Then $Z(G) \ge  [\sum_{i=1}^{k} Z(G_i)]-k+1$.
\end{Thm}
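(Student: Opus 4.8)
The plan is to fix a zero forcing set $S$ of $G$ with $|S|=Z(G)$ and to build from it a zero forcing set of each $G_i$, then sum the resulting inequalities. The basic device is: for each $i$ put $S_i=(S\cap V_i)\cup\{v\}$. I claim every such $S_i$ is a zero forcing set of $G_i$. The point is that $v$ is a cut-vertex, so each vertex of $V_i$ has all of its $G$-neighbors inside $V(G_i)=V_i\cup\{v\}$; hence any force carried out within $V_i$ during the color-change process on $G$ is also a legal force in $G_i$, and any force carried out by $v$ in $G$ onto a white neighbor $w$ (which must then be $v$'s unique white $G$-neighbor) is a fortiori legal in $G_i$, where $v$ has only a subset of those neighbors. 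Since $v$ is black in $S_i$ from the start, one replays inside $G_i$, in the same order, exactly those $G$-forces landing in $V(G_i)$, maintaining the invariant that the current black set of $G_i$ always contains the current black set of $G$ restricted to $V(G_i)$; this invariant makes each replayed force legal and, since the $G$-process blackens all of $V(G)$, it forces all of $V(G_i)$. Thus $Z(G_i)\le|S_i|=|S\cap V_i|+1$.

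If $v\in S$, this already finishes the proof: the sets $S\cap V_1,\dots,S\cap V_k$ partition $S\setminus\{v\}$, so
\[
\sum_{i=1}^{k} Z(G_i)\le \sum_{i=1}^{k}\bigl(|S\cap V_i|+1\bigr)=(|S|-1)+k=Z(G)+k-1,
\]
which is exactly the asserted bound.

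The case $v\notin S$ requires one extra idea, since the naive count above only gives $\sum_i Z(G_i)\le|S|+k$, one unit too weak. Here $v$ must be forced at some stage of the $G$-process; let $u_0$ be the vertex that forces it and let $i_0$ be the index with $u_0\in V_{i_0}$. At that moment every $G$-neighbor of $u_0$ other than $v$ — and all of them lie in $V_{i_0}$ — is already black. I would then show that the \emph{smaller} set $S\cap V_{i_0}$ (without adjoining $v$) already forces $G_{i_0}$: replay in $G_{i_0}$ first the $G$-forces internal to $V_{i_0}$ that precede the blackening of $v$ (none uses $v$ as a forcer, so all are legal), then the force $u_0\to v$ (legal in $G_{i_0}$ as above), then the remaining $G$-forces landing in $V(G_{i_0})$, again maintaining the containment invariant. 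Hence $Z(G_{i_0})\le|S\cap V_{i_0}|$, and combining with $Z(G_i)\le|S\cap V_i|+1$ for the other $k-1$ indices yields $\sum_{i=1}^{k}Z(G_i)\le|S|+(k-1)=Z(G)+k-1$. In both cases $Z(G)\ge\bigl[\sum_{i=1}^{k}Z(G_i)\bigr]-k+1$.

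I expect the main obstacle to be exactly the bookkeeping in the claim that $S_i$ (respectively $S\cap V_{i_0}$) is a zero forcing set of $G_i$ (respectively $G_{i_0}$): one must argue carefully, via the cut-vertex property, that the $G$-process restricts faithfully to $V(G_i)$ — no vertex outside $V_i$ ever forces into $V_i$ except through $v$, and every force involving $v$ survives passage to the subgraph — and one must set up the order-preserving replay together with the containment invariant so that monotonicity of the color-change rule can be invoked. Once that is in place, the remainder is a one-line count in each of the two cases.
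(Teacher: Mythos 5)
This theorem is not proved in the paper at all: it is quoted verbatim from Row's article \cite{cutvertex}, so there is no in-paper argument to compare yours against. Judged on its own, your proof is correct and complete. The restriction-and-replay argument works exactly as you describe: because $v$ is a cut-vertex, every neighbor of a vertex $u\in V_i$ lies in $V_i\cup\{v\}=V(G_i)$, so for any replayed force $u\to w$ all $G$-neighbors of $u$ other than $w$ are black in $G$ and lie in $V(G_i)$, whence your containment invariant alone certifies legality in $G_i$ --- this in fact also covers the pre-$v$ forces in the $v\notin S$ case (your parenthetical ``none uses $v$ as a forcer'' is not the operative reason; the real point is that such a forcer $u$ cannot be adjacent to a still-white $v$, which is again delivered by the invariant plus legality in $G$). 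The case split on whether $v\in S$, with the extra saving of one unit at the index $i_0$ whose component supplies the forcer of $v$, is exactly the right bookkeeping, and the final count gives $\sum_i Z(G_i)\le Z(G)+k-1$ in both cases. This is essentially the standard proof of Row's cut-vertex bound.
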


The following terminology are defined for a graph $G$. A vertex of degree at least three is called a \emph{major vertex}. A leaf $u$ is called \emph{a terminal vertex of a major vertex} $v$ if $d(u, v)<d(u, w)$ for every other major vertex $w$. The \emph{terminal degree}, $ter(v)$, of a major vertex $v$ is the number of terminal vertices of $v$. A major vertex $v$ is an \emph{exterior major vertex} if it has positive terminal degree. An \emph{exterior degree two vertex} is a vertex of degree 2 that lies on a shortest path from a terminal vertex to its major vertex, and an \emph{interior degree two vertex} $z$ is a vertex of degree 2 such that a shortest path from $z$ to any terminal vertex includes a major vertex.

\begin{Thm} \cite{dimZ} \label{DZtree}
Let $T$ be a tree. Then
\begin{itemize}
\item[(a)] $dim(T) \leq Z(T)$,
\item[(b)] $dim(T)=Z(T)$ if and only if $T$ has no interior degree two vertex and each major vertex $v$ of $T$ satisfies $ter(v) \ge 2$.
\end{itemize}
\end{Thm}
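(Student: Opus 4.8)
The plan is to reduce everything to the two explicit quantities $Z(T)=P(T)$ (Theorem~\ref{pathcover}(a)) and the classical value $dim(T)=\sigma(T)-ex(T)$ of the metric dimension of a tree that is not a path, where $ex(T)$ denotes the number of exterior major vertices (see \cite{CEJO}); the case $T=P_n$ is trivial since $dim(P_n)=Z(P_n)=1$, and here there is no major vertex so both conditions of (b) hold vacuously. Throughout I therefore assume $T$ has a major vertex. The whole argument rests on one bookkeeping observation: in any path cover $\mathcal{P}$ of $T$, a leaf (being of degree $1$) cannot be interior to any induced path, so each leaf is an endpoint of the unique member of $\mathcal{P}$ containing it. Writing $b(Q)\in\{0,1,2\}$ for the number of leaves among the endpoints of $Q\in\mathcal{P}$, we get $\sigma(T)=\sum_{Q\in\mathcal{P}}b(Q)$.

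For part (a) I would bound $\#\{Q\in\mathcal{P}:b(Q)=2\}$ by $ex(T)$. To each path $Q$ with two leaf-endpoints, assign the closest major vertex $v_\ell$ of one of its leaf-endpoints $\ell$; since $\ell$ is a terminal vertex of $v_\ell$, the vertex $v_\ell$ is an exterior major vertex, and since $Q$ issues from $\ell$ along its terminal leg we have $v_\ell\in Q$. As the members of $\mathcal{P}$ are vertex-disjoint, $Q\mapsto v_\ell$ is injective into the set of exterior major vertices, so $\#\{Q:b(Q)=2\}\le ex(T)$. Since $\sum_Q(b(Q)-1)=\#\{Q:b(Q)=2\}-\#\{Q:b(Q)=0\}$, we obtain $\sigma(T)=|\mathcal{P}|+\sum_Q(b(Q)-1)\le |\mathcal{P}|+ex(T)$, whence $|\mathcal{P}|\ge \sigma(T)-ex(T)=dim(T)$. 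Minimizing over covers gives $Z(T)=P(T)\ge dim(T)$, which is (a).

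For the sufficiency half of (b) I would exhibit a cover of size exactly $\sigma(T)-ex(T)$ under the two hypotheses. The ``no interior degree two vertex'' hypothesis means every degree two vertex lies on a terminal leg, so together with ``every major vertex $v$ has $ter(v)\ge 2$'' every vertex of $T$ is either a major vertex (necessarily exterior) or lies on a terminal leg. At each exterior major vertex $v$ I splice two of its $ter(v)\ge 2$ terminal legs through $v$ into one path and leave each of the remaining $ter(v)-2$ legs (with $v$ deleted) as its own path; the skeleton edges joining major vertices go unused. This is a valid path cover, of size $\sum_v 1+\sum_v(ter(v)-2)=ex(T)+(\sigma(T)-2\,ex(T))=\sigma(T)-ex(T)$, so it meets the lower bound of (a) and forces $Z(T)=dim(T)$.

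The necessity half of (b) is where the real work lies, and it is the step I expect to be the main obstacle. Suppose $dim(T)=Z(T)$; then some cover $\mathcal{P}$ attains $|\mathcal{P}|=\sigma(T)-ex(T)$, and the inequality chain of part (a) collapses, forcing both $\#\{Q:b(Q)=2\}=ex(T)$ and $\#\{Q:b(Q)=0\}=0$. The $ex(T)$ two-leaf paths are vertex-disjoint and each contains at least one exterior major vertex, while only $ex(T)$ exist, so a pigeonhole argument shows each such path contains exactly one exterior major vertex $v$ and consists precisely of two terminal legs of $v$ joined at $v$; in particular every exterior major vertex is the center of such a path, forcing $ter(v)\ge 2$. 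Counting the remaining ($b=1$) paths against the leftover terminal legs then shows that each remaining path is trapped inside a single terminal leg, since it starts at a leaf and cannot pass the major vertex ending that leg (that vertex already lies on a two-leaf path). The delicate conclusion is that these paths cover only terminal legs and exterior major vertices, so an interior degree two vertex or a major vertex of terminal degree $0$ would be left uncovered, while a major vertex of terminal degree exactly $1$ is an exterior major vertex that cannot be the center of a two-leg path; in every case we contradict that $\mathcal{P}$ is a cover of the required size. Hence $T$ has no interior degree two vertex and every major vertex satisfies $ter(v)\ge 2$, completing the characterization.
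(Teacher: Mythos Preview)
The paper does not contain a proof of this theorem; it is quoted as a known result from \cite{dimZ}. So there is no ``paper's own proof'' to compare against, and your write-up stands on its own merits.

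Your argument is sound. The reduction to $Z(T)=P(T)$ and $dim(T)=\sigma(T)-ex(T)$ is the natural move, and the leaf-endpoint bookkeeping $\sigma(T)=\sum_Q b(Q)$ together with the injection $Q\mapsto v_\ell$ (for two-leaf paths $Q$) into the exterior major vertices cleanly gives part~(a). For sufficiency in~(b), your explicit cover of size $\sigma(T)-ex(T)$ is correct; note in passing that the ``no interior degree-two vertex'' hypothesis also rules out major vertices of terminal degree~$0$, so every major vertex is indeed exterior, as you use. For necessity, the collapse of the inequality chain forces $\#\{b(Q)=2\}=ex(T)$ and $\#\{b(Q)=0\}=0$; the key step---that each two-leaf path contains \emph{exactly one} exterior major vertex---follows because every exterior major vertex lies in some two-leaf path (the map $Q\mapsto v_\ell$ is onto by cardinality) and paths are disjoint, so no path can hold two. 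From this, $v_\ell=v_{\ell'}$ for the two leaf-ends, and the path is exactly two legs through that vertex. The one-leaf paths are then confined to single terminal legs (they cannot reach their major vertex, already taken), and any interior degree-two vertex or any major vertex with $ter<2$ is left uncovered, contradiction.

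One small point of exposition: in the necessity argument you might make explicit that an exterior major vertex $w$ with $ter(w)=1$ is immediately contradicted because the two-leaf path containing $w$ would need two terminal legs at $w$; you say this, but it is worth separating from the $ter=0$ case, which is ruled out by the coverage argument rather than by the structure of two-leaf paths.
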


It is shown in~\cite{dimZ2} that $P(T) \le \sigma(T)-1$; this and Theorem~\ref{pathcover} imply the following

\begin{Thm} \label{sdZtree}
For any tree $T$, $Z(T) \le sdim(T)$.
\end{Thm}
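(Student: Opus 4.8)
The plan is simply to chain the three ingredients that the excerpt has already lined up. By Theorem~\ref{pathcover}(a) we have $Z(T)=P(T)$; by Theorem~\ref{pathcover}(b) we have $sdim(T)=\sigma(T)-1$; and by the result of~\cite{dimZ2} we have $P(T)\le \sigma(T)-1$. Concatenating these,
\[
Z(T)=P(T)\le \sigma(T)-1=sdim(T),
\]
which is exactly the assertion. Thus nothing is needed beyond quoting Theorem~\ref{pathcover} except the bound $P(T)\le\sigma(T)-1$ comparing the path cover number with the number of leaves, and that is precisely the cited input from~\cite{dimZ2}.

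If one prefers a self-contained argument, $P(T)\le\sigma(T)-1$ can be proved by induction on $\sigma(T)$. The base case $\sigma(T)=2$ is a path, with $P(T)=1=\sigma(T)-1$. If $\sigma(T)=k\ge 3$, then $T$ is not a path, so taking the first vertex $v$ of degree at least $3$ along a longest path exhibits $v$ as an exterior major vertex one of whose branches is a \emph{leg} $L$, i.e.\ an induced path from a neighbor of $v$ to a leaf; note $\deg_T(v)\ge 3$. Deleting $V(L)$ leaves a tree $T'$ in which $v$ still has degree at least $2$, so no new leaf is created and $\sigma(T')=k-1$. By the inductive hypothesis $P(T')\le k-2$; adjoining the single induced path $L$ (which is vertex-disjoint from $T'$ since $T$ has no cycles) to an optimal path cover of $T'$ covers $V(T)$ with at most $k-1$ vertex-disjoint induced paths, giving $P(T)\le \sigma(T)-1$.

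I expect no genuine obstacle: every step is either a direct appeal to Theorem~\ref{pathcover} or a routine induction. The only points requiring a moment's care in the self-contained version are that a leg really is an \emph{induced} subpath of $T$ and is vertex-disjoint from $T'$ — both immediate from acyclicity — and that removing a leg from an exterior major vertex of degree at least $3$ does not promote that vertex to a new leaf.
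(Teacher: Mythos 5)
Your proof is correct and is essentially identical to the paper's, which likewise obtains the result by combining $Z(T)=P(T)$, $sdim(T)=\sigma(T)-1$, and the cited inequality $P(T)\le\sigma(T)-1$ from~\cite{dimZ2}. The optional leg-deletion induction you sketch for $P(T)\le\sigma(T)-1$ is a sound bonus, but the paper simply cites that inequality rather than reproving it.
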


Next, we characterize trees $T$ satisfying $Z(T)=sdim(T)$.

\begin{Thm}\label{sdim=Z,T}
For any tree $T$, we have $Z(T)=sdim(T)$ if and only if $T$ has an interior degree two vertex on every $v_i-v_j$ path, where $v_i$ and $v_j$ are major vertices of $T$.
\end{Thm}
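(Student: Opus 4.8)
The plan is to reduce the statement to a combinatorial fact about the path cover number and then to analyze that via a complementary edge-deletion quantity. By Theorem~\ref{pathcover} we have $Z(T)=P(T)$ and $sdim(T)=\sigma(T)-1$, so it suffices to characterize when $P(T)=\sigma(T)-1$. The first step is to observe that a path cover of $T$ by induced paths is the same data as a set $F\subseteq E(T)$ in which every vertex is incident to at most two edges of $F$: the components of such an $F$ are paths, and since $T$ is acyclic each of these paths is automatically an induced subgraph of $T$, while conversely a path cover yields such an $F$. Writing $\mu(T)$ for the minimum number of edges whose deletion from $T$ leaves every vertex of degree at most $2$, and counting components, we get $P(T)=|V(T)|-\max_F|F|=1+\mu(T)$ (using $|E(T)|=|V(T)|-1$). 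A deleted-edge set $D$ is feasible precisely when $|D\cap E(v)|\ge\deg_T(v)-2$ for every $v$, and the handshake lemma gives $\sum_{v}\max\{0,\deg_T(v)-2\}=\sigma(T)-2$; choosing $\deg_T(v)-2$ incident edges at each major vertex therefore re-proves $\mu(T)\le\sigma(T)-2$, i.e. $P(T)\le\sigma(T)-1$. Hence $Z(T)=sdim(T)$ if and only if $\mu(T)=\sigma(T)-2$, and the problem becomes: when can no feasible $D$ have fewer than $\sigma(T)-2$ edges?

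The second step is to translate the theorem's hypothesis into a local condition: \emph{it holds if and only if no two major vertices of $T$ are adjacent}. If two major vertices are adjacent, the path joining them is a single edge with no interior vertex, so the hypothesis fails. Conversely, I would prove the small lemma that \emph{a degree-$2$ vertex lying strictly between two major vertices on the path joining them is an interior degree-two vertex}: following such a vertex $z$ toward either side of that path, one reaches a vertex of degree at least $3$ or one of the two major endpoints before reaching any leaf, so every shortest path from $z$ to a terminal vertex passes through a major vertex. Given the lemma, if no two major vertices are adjacent then any path joining two major vertices $v_i,v_j$ has an internal vertex of degree $2$ (otherwise its first internal vertex would be a major vertex adjacent to $v_i$), and that vertex is an interior degree-two vertex; so the hypothesis holds.

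For the ``if'' direction, assume no two major vertices are adjacent. Then every edge of $T$ has at most one major endpoint, so for any feasible $D$ each edge of $D$ is counted at most once in $\sum_{v\ \mathrm{major}}|D\cap E(v)|$, giving $|D|\ge\sum_{v\ \mathrm{major}}|D\cap E(v)|\ge\sum_{v\ \mathrm{major}}(\deg_T(v)-2)=\sigma(T)-2$. Thus $\mu(T)\ge\sigma(T)-2$, so $\mu(T)=\sigma(T)-2$ and $P(T)=\sigma(T)-1$. For the ``only if'' direction I argue contrapositively: if the hypothesis fails then by the translation above some two major vertices $u,v$ are adjacent, and I exhibit a feasible $D$ with $|D|\le\sigma(T)-3$ by choosing, at each major vertex $w$, a set $S_w$ of $\deg_T(w)-2$ incident edges with $uv\in S_u$ and $uv\in S_v$, and setting $D=\bigcup_{w\ \mathrm{major}}S_w$; since $uv$ is counted twice in $\sum_w|S_w|=\sigma(T)-2$ but once in $|D|$, we get $|D|\le\sigma(T)-3$, hence $\mu(T)\le\sigma(T)-3$ and $P(T)\le\sigma(T)-2<\sigma(T)-1$.

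The routine estimates above are easy; the step I expect to require the most care is the second one — pinning down that the ``interior degree two'' terminology is exactly the obstruction to adjacency of major vertices, together with the degenerate cases (a path, or a tree with a single major vertex) in which both the hypothesis and the conclusion $Z(T)=sdim(T)$ hold vacuously and must be checked separately.
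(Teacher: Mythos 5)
Your proof is correct, but it takes a genuinely different route from the paper's. Both arguments rest on the same preliminary translation---that the interior-degree-two condition fails precisely when two major vertices of $T$ are adjacent---which the paper handles tersely in its forward direction (``we may assume $v_1v_2\in E(T)$; if not, replace $v_2$ by the vertex adjacent to $v_1$ on the path'') and which you isolate and justify as an explicit lemma; that is a point in your favor, since the reduction is not completely obvious. After that the methods diverge. The paper proves the forward implication by splitting $T$ along the edge $v_1v_2$ and combining subadditivity of $P$ with $P\le\sigma-1$ on each piece, and proves the reverse implication by induction on the number of major vertices, splitting $T$ at an interior degree-two vertex and invoking the cut-vertex lower bound $Z(T)\ge Z(T_\ell)+Z(T_r)-1$ of Theorem~\ref{cutV}. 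You instead establish the exact identity $P(T)=1+\mu(T)$, where $\mu(T)$ is the minimum number of edges whose deletion leaves every degree at most $2$, after which both implications become short counting arguments: the greedy bound $\mu(T)\le\sum_{v\ \mathrm{major}}(\deg_T(v)-2)=\sigma(T)-2$ is tight exactly when no deleted edge can be charged to two major vertices simultaneously, and an edge joining two adjacent major vertices can be so charged, saving one deletion. I checked the key steps---the bijection between path covers and degree-at-most-two edge sets (valid because every path subgraph of a tree is induced), the identities $P(T)=|V(T)|-\max|F|=1+\mu(T)$ and $\sum_v\max\{0,\deg_T(v)-2\}=\sigma(T)-2$, and the feasibility of your deletion sets---and they all hold. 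Your approach is more self-contained (it needs neither Theorem~\ref{cutV} nor induction, and it re-derives $P(T)\le\sigma(T)-1$ along the way) and makes the equality/strict-inequality dichotomy transparent; the paper's approach is shorter given the machinery it cites.
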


\begin{proof}
($\Longrightarrow$) Suppose that there exist a pair of major vertices, say $v_1$ and $v_2$, in $T$ such that no interior degree two vertex lies in the $v_1-v_2$ path. We may assume $v_1v_2 \in E(T)$. If not, replace $v_2$ with the vertex adjacent to $v_1$ on the $v_1-v_2$ path. We consider two disjoint subtrees $T_1,T_2 \subset T$ such that $v_1 \in V(T_1)$, $v_2 \in V(T_2)$, $V(T)=V(T_1) \cup V(T_2)$ and $E(T)=E(T_1) \cup E(T_2) \cup \{v_1v_2\}$. By Theorem \ref{sdZtree}, $P(T_1) \le \sigma(T_1)-1$ and $P(T_2) \le \sigma(T_2)-1$. So, $P(T) \le P(T_1)+P(T_2) \le \sigma(T_1)+\sigma(T_2)-2 =\sigma(T)-2$, i.e., $Z(T) \le sdim(T)-1$.\\

($\Longleftarrow$) We will induct on $m(T)$, the number of major vertices of the tree $T$. If $m(T)=0$, then $Z(T)=1=sdim(T)$; if $m(T)=1$, then $Z(T)=P(T)=\sigma(T)-1=sdim(T)$. Suppose the statement holds for all trees $T$ with $2\leq m(T)\leq k$. Let $x$ be a degree $2$ vertex lying between two major vertices $u$ and $v$ of a tree $T$ with $m(T)=k+1$. Let $\ell$ and $r$ be the two edges of $T$ incident with $x$, and denote by $T_\ell$ ($T_r$, resp.) the subtree of $T-r$ ($T-\ell$, resp.) containing $x$. Clearly, $T$ is the vertex sum of $T_\ell$ and $T_r$ at the vertices being labeled $x$. The induction hypothesis applies to $T_\ell$ and $T_r$, since each has at most $k$ major vertices; thus, $Z(T_\ell)=\sigma(T_\ell)-1$ and $Z(T_r)=\sigma(T_r)-1$. Now by Theorem \ref{cutV}, $Z(T)\geq (Z(T_\ell)+Z(T_r))-1=(\sigma(T_\ell)-1+\sigma(T_r)-1)-1=\sigma(T)-1= sdim(T)$; thus, by Theorem \ref{sdZtree}, $Z(T)=sdim(T)$.~\hfill
\end{proof}

\begin{Rem}
Notice $dim(T) \le Z(T) \le sdim(T)$ by Theorem~\ref{DZtree}(a) and Theorem~\ref{sdZtree}, where the equalities are characterized by Theorem~\ref{DZtree}(b) and Theorem~\ref{sdim=Z,T}.
\end{Rem}


\section{The zero forcing number and the strong metric dimension of unicyclic graphs}

A graph is \emph{unicyclic} if it contains exactly one cycle. Notice that a connected graph $G$ is unicyclic if and only if $|E(G)|=|V(G)|$. By $T+e$, we shall mean a unicyclic graph obtained from a tree $T$ by attaching the edge $e$ joining two non-adjacent vertices of $T$. In this section, we show that $Z(G) \le sdim(G)$ for a unicyclic graph $G$ and the bound is sharp. We first recall some results that will be used here.\\

We say that $x\in V(G)$ is \emph{maximally distant} from $y\in V(G)$ if $d_G(x,y)\geq d_G(z,y)$, for every $z\in N_G(x)=\{v\in V(G) \mid xv \in E(G)\}$. If $x$ is maximally distant from $y$ and $y$ is maximally distant from $x$, then we say that $x$ and $y$ are \emph{mutually maximally distant} and denote this by $x$ MMD $y$. It is pointed out in~\cite{sdim} that if $x$ MMD $y$ in $G$, then any strong resolving set of $G$ must contain either $x$ or $y$. Noting that any two distinct leaves of a graph $G$ are MMD, we have the following\\

\begin{Obs}\label{strongobservation2}
For any connected graph $G$, all but one of the $\sigma(G)$ leaves must belong to any strong resolving set of $G$.
\end{Obs}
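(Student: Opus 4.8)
The plan is to deduce the statement directly from the two facts recalled just above Observation~\ref{strongobservation2}: first, that if $x$ MMD $y$ in $G$ then every strong resolving set of $G$ must contain $x$ or $y$; and second, that any two distinct leaves of $G$ are mutually maximally distant. Granting these, the argument is a short pigeonhole-type observation: a strong resolving set cannot avoid two distinct leaves simultaneously, hence it omits at most one leaf.

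First I would pin down why any two distinct leaves $x,y$ of a connected graph $G$ (recall $|V(G)|\ge 2$) are MMD, since this is the only point that needs a small verification. The leaf $x$ has a unique neighbor $x'$, and because $G$ is connected, every shortest $x$–$z$ path with $z\ne x$ begins with the edge $xx'$; thus $d_G(x,z)=d_G(x',z)+1>d_G(x',z)$. In particular $d_G(x,y)\ge d_G(x',y)$, and since $x'$ is the only element of $N_G(x)$, this says exactly that $x$ is maximally distant from $y$. By the symmetric argument $y$ is maximally distant from $x$, whence $x$ MMD $y$.

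Then I would finish as follows. Let $W$ be an arbitrary strong resolving set of $G$, and suppose for contradiction that $W$ fails to contain two distinct leaves $x$ and $y$. Since $x$ MMD $y$, the cited property forces $x\in W$ or $y\in W$, a contradiction. Therefore at most one of the $\sigma(G)$ leaves lies outside $W$; equivalently, $W$ contains all but (at most) one leaf of $G$. The cases $\sigma(G)\in\{0,1\}$ are vacuous.

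I do not expect a genuine obstacle here: the statement is essentially a corollary of the MMD criterion for strong resolving sets. The only care needed is the routine check that every pair of distinct leaves is MMD and the trivial observation that the conclusion holds vacuously when $G$ has at most one leaf.
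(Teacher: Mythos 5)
Your argument is exactly the paper's: the authors justify the observation in the sentence preceding it, by citing that an MMD pair cannot be wholly omitted from a strong resolving set and noting that any two distinct leaves are MMD, which is precisely your pigeonhole step. Your added verification that a leaf is maximally distant from every other vertex (via its unique neighbor) is correct and just fills in a detail the paper leaves implicit.
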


\begin{Thm} \label{strongdimthm}
Let $G$ be a connected graph of order $n \ge 2$. Then
\begin{itemize}
\item[(a)] \cite{Z+e} $Z(G)-1 \le Z(G+e) \le Z(G)+1$ for $e \in E(\overline{G})$, where $\overline{G}$ denotes the complement of $G$,
\item[(b)] \cite{sdimGbar} $sdim(G)=1$ if and only if $G=P_n$.
\end{itemize}
\end{Thm}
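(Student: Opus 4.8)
The plan is to treat the two parts separately, as they are recalled from different sources. For part (a), I would derive both inequalities from a single principle: changing one edge alters $Z$ by at most one. Writing $H=G+e$, the lower bound $Z(G+e)\ge Z(G)-1$ is precisely the inequality $Z(H-e)\le Z(H)+1$ (deleting one edge raises $Z$ by at most one) applied to $H$, while $Z(G+e)\le Z(G)+1$ is the analogous ``addition'' statement; so it suffices to show that a single extra vertex in a zero forcing set absorbs either change. I would do this by a process-mimicking argument: fix a minimum zero forcing set $S$ of the smaller graph together with a valid sequence of forces realizing it, let $u,v$ be the endpoints of $e$, and choose the labeling so that $v$ is blackened no earlier than $u$ during this process (for edge deletion one instead singles out the endpoint that is forced across the deleted edge, which is automatically blackened later; if no force uses the edge, then $S$ itself already works on $G-e$). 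I then claim $S\cup\{v\}$ is a zero forcing set of the modified graph, proved by induction on the number of forces, carrying the invariant that the black set of the mimicked process always contains the black set of the original process. A force performed by a vertex not incident to $e$ transfers at once, since, under the invariant, shrinking or enlarging its white-neighbor set cannot turn a legal force illegal; a force performed by $u$ is fine because $v$ now lies in the initial set and is black throughout, so it never counts as a white neighbor; and a force performed by $v$ is fine because the labeling guarantees $u$ is already black at the moment $v$ acts. Hence the modified graph has zero forcing number at most $|S|+1$, which yields (a).

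For part (b), the ``if'' direction is immediate: in $P_n$ a single endpoint $w$ is a strong resolving set, since of any two vertices the one closer to $w$ lies on the unique geodesic from $w$ to the other, and $sdim(G)\ge 1$ for every graph of order at least two. For the converse, suppose $\{w\}$ is a strong resolving set and consider the distance layers $L_i=\{z : d(w,z)=i\}$. Every pair containing $w$ is automatically strongly resolved by $w$, so the condition constrains only pairs $x,y\in V(G)\setminus\{w\}$, forcing $d(w,y)=d(w,x)+d(x,y)$ or $d(w,x)=d(w,y)+d(x,y)$. For two distinct vertices in a common layer $L_i$ this reads $i=i+d(x,y)$, which is impossible; hence each nonempty layer is a singleton, so $V(G)=\{x_0,\dots,x_{n-1}\}$ with $d(w,x_i)=i$ and $x_0=w$. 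Since each $x_i$ with $i\ge 1$ must have a neighbor in $L_{i-1}=\{x_{i-1}\}$, and no edge can join vertices whose distances to $w$ differ by more than one, the edge set is exactly $\{x_{i-1}x_i : 1\le i\le n-1\}$, so $G=P_n$.

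The step I expect to be the main obstacle is the inductive bookkeeping in part (a): making sure the mimicked process never stalls. The delicate case is the one in which an endpoint of $e$ is the \emph{forcing} vertex of a step, and it is precisely this case that dictates which endpoint must be added to the forcing set --- the one blackened later (equivalently, for deletion, the one forced across $e$) --- rather than an arbitrary endpoint. Part (b) poses no real difficulty; the only point worth a remark is that, once the layers are known to be singletons, the triangle inequality for graph distance forbids any chord and pins $G$ down to a path.
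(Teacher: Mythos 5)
The paper does not prove this theorem: it is recalled from the literature, with part (a) cited to \cite{Z+e} and part (b) cited to \cite{sdimGbar}, so there is no in-paper argument to compare against. Judged on its own, your proof is correct, and both halves are essentially the standard arguments from those references. In (a), the process-mimicking scheme is sound: adding to a minimum zero forcing set the endpoint of $e$ that is blackened later (for edge addition), or the endpoint forced across $e$ (for edge deletion, with no addition needed if no force crosses $e$), preserves the invariant that the mimicked black set contains the original one, and your case analysis --- forcing vertex off $e$, forcing vertex $u$, forcing vertex $v$ --- covers every step without stalling. In (b), the key observation that a single vertex $w$ cannot strongly resolve two distinct vertices in the same distance layer (since either alternative forces $d(x,y)=0$) makes every layer a singleton, and the absence of edges between non-consecutive layers then pins $G$ down to $P_n$; this is complete.
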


\begin{Prop}\label{unicyclic}
Let $T$ be a tree of order at least three. Then $sdim(T+e) \ge sdim(T)-2$ for $e \in E(\overline{T})$, and the bound is sharp.
\end{Prop}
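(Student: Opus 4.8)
The plan is to combine a crude leaf count for $T+e$ with Observation~\ref{strongobservation2} and Theorem~\ref{pathcover}(b); the inequality itself is then immediate, and all the real work goes into the sharpness example.

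\textbf{The lower bound.} Write $e=uv$, so that $u,v$ are non-adjacent vertices of $T$. Since $T$ is connected of order at least three, no vertex of $T$ has degree $0$, so adding the edge $uv$ changes only the degrees of $u$ and $v$ (each increases by one) and creates no new vertex of degree one; hence the set of leaves of $T+e$ is exactly the set of leaves of $T$ with $u$ and $v$ removed, and $\sigma(T+e)\ge\sigma(T)-2$. By Observation~\ref{strongobservation2}, every strong resolving set of $T+e$ omits at most one of its $\sigma(T+e)$ leaves, so $sdim(T+e)\ge\sigma(T+e)-1\ge\sigma(T)-3$. Since $sdim(T)=\sigma(T)-1$ by Theorem~\ref{pathcover}(b), this reads $sdim(T+e)\ge sdim(T)-2$.

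\textbf{Sharpness.} To realize equality I would look for a tree $T$ and an edge $e=uv$ with $u,v$ \emph{leaves} of $T$ (this forces $\sigma(T+e)=\sigma(T)-2$) such that, in addition, $T+e$ meets the Observation~\ref{strongobservation2} bound with equality, i.e. $sdim(T+e)=\sigma(T+e)-1$. The naive attempt of joining two leaves of a star $K_{1,n}$ fails: in the resulting graph those two (now degree-two) vertices remain mutually maximally distant from every other leaf, so the $n$ original leaves still form a clique among the MMD pairs and $sdim$ does not drop. The fix is to force the two ex-leaves to be maximally distant from almost nothing while every other non-leaf cycle vertex ``shields itself'' with a pendant. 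Concretely, let $T$ be the caterpillar with spine $v_1v_2\cdots v_6$ and one pendant leaf $\ell_i$ at each internal spine vertex $v_2,v_3,v_4,v_5$; then $\sigma(T)=6$, so $sdim(T)=5$. Set $e=v_1v_6$, so $G:=T+e$ is the $6$-cycle $v_1v_2\cdots v_6v_1$ with a pendant at each of $v_2,v_3,v_4,v_5$. A short computation of mutually maximally distant pairs in $G$ gives: a cycle vertex carrying a pendant is maximally distant from no vertex (its pendant neighbour is always strictly farther), hence lies in no MMD pair; the degree-two vertices $v_1$ and $v_6$ are MMD only with the pendant of the antipodal cycle vertex, i.e. $v_1$ with $\ell_4$ and $v_6$ with $\ell_3$; and the four leaves $\ell_2,\ell_3,\ell_4,\ell_5$ are pairwise MMD. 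Thus $\{\ell_3,\ell_4,\ell_5\}$ meets every MMD pair, and one checks directly that it is a strong resolving set of $G$, so $sdim(G)\le 3$; together with the lower bound this yields $sdim(T+e)=3=sdim(T)-2$. The same construction with spine $v_1\cdots v_n$ for even $n\ge 6$ and pendants at $v_2,\dots,v_{n-1}$ produces an infinite family of sharp examples.

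\textbf{The main obstacle.} Nothing is difficult about the inequality; the delicate point is the example. One must produce a unicyclic graph $T+e$ that simultaneously drops two leaves and attains $sdim=\sigma-1$, and several natural candidates (star-plus-chord, brooms, double-star-plus-chord) instead give $sdim(T+e)\in\{sdim(T)-1,\ sdim(T)\}$. Pinning down the right cycle length and pendant placement — and then verifying the claimed MMD structure of $G$ (equivalently, that the strong resolving graph of $G$ is a clique on its leaves plus two ``whisker'' edges, so its vertex cover number is $\sigma(G)-1$) — is the only genuinely computational step, and it is where the argument needs care.
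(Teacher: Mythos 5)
Your proof is correct and follows essentially the same route as the paper: the lower bound is the identical leaf-count argument ($\sigma(T+e)\ge\sigma(T)-2$ combined with Observation~\ref{strongobservation2} and Theorem~\ref{pathcover}(b)), and your sharpness example---a caterpillar whose two end-leaves are joined to form a cycle with pendants, all but one of which form a strong resolving set---is the same construction idea as the paper's ``comb'' family, just instantiated at a single small case (your MMD analysis and the claim that $\{\ell_3,\ell_4,\ell_5\}$ strongly resolves $G$ both check out).
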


\begin{proof}
Since $\sigma(T)-2 \le \sigma(T+e) \le \sigma(T)$, the desired inequality follows from Theorem~\ref{pathcover}(b) and Observation~\ref{strongobservation2}. For the sharpness of the bound, let $T$ be the ``comb" with $k \ge 4$ exterior major vertices (see Figure~\ref{sduni}). Then $sdim(T)=\sigma(T)-1=k+1$. Since $\{\ell_i \mid 1 \le i \le k-1\}$ forms a strong resolving set for $T+e$, $sdim(T+e) \le k-1=sdim(T)-2$; thus $sdim(T+e)=sdim(T)-2$.~\hfill
\end{proof}

\begin{figure}[htpb]
\begin{center}
\scalebox{0.5}{\input{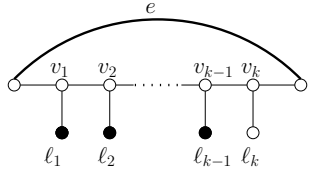}} \caption{Unicyclic graph $T+e$ satisfying $sdim(T+e)=sdim(T)-2$}\label{sduni}
\end{center}
\end{figure}

\begin{Rem}
We note that $sdim(T+e)-sdim(T)$ can be arbitrarily large. For example, suppose that $T=P_n$ and $T+e=C_n$; then $sdim(T)=1$ and, as noted in~\cite{sdim}, $sdim(C_n)=\lceil\frac{n}{2}\rceil$.
\end{Rem}

Theorem~\ref{sdZtree}, Theorem~\ref{strongdimthm}(a), and Proposition~\ref{unicyclic} imply that $Z(T+e) \le sdim(T+e)+3$. We will show that, in fact, $Z(T+e) \le sdim(T+e)$.\\

As defined in~\cite{tree-like}, a \emph{partial $n$-sun} is the graph $H_n$ obtained from $C_n$ by appending a leaf to each vertex in some $U \subseteq V(C_n)$, and a \emph{segment} of $H_n$ refers to any maximal subset of consecutive vertices in $U$. By a \emph{generalized partial $n$-sun}, we shall mean a graph obtained from $C_n$ by attaching a finite, and not necessarily equal, number of leaves to each vertex $v\in V(C_n)$. See Figure~\ref{6sun}.\\

\begin{figure}[htpb]
\begin{center}
\scalebox{0.5}{\input{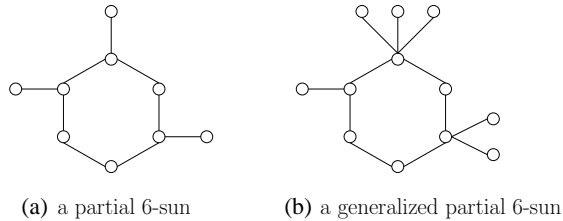}} \caption{A partial 6-sun and a generalized partial 6-sun}\label{6sun}
\end{center}
\end{figure}

\begin{Thm}~\cite{cutvertex}\label{segment}
Let $H_n$ be a partial $n$-sun with segments $U_1, U_2, \ldots, U_t$. Then
$$Z(H_n)=\max \left\{2, \sum_{i=1}^{t} \left\lceil \frac{|U_i|}{2} \right\rceil \right\}.$$
\end{Thm}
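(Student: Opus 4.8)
\emph{How one might prove this.} I would establish the two inequalities separately.

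\textbf{Lower bound.} First, $Z(H_n)\ge 2$: if $U=\emptyset$ then $H_n=C_n$ and $Z(C_n)=2$, and if $U\ne\emptyset$ then $H_n$ has a vertex of degree $3$ and no single vertex is a zero forcing set (a lone black vertex can force only when it is a leaf, and then its neighbour has two white neighbours). For $Z(H_n)\ge\sum_{i=1}^{t}\lceil|U_i|/2\rceil$ I would use the forcing-chain description of zero forcing~\cite{AIM}: once a valid sequence of forces is fixed, a minimum zero forcing set $S$ partitions $V(H_n)$ into $|S|$ vertex-disjoint paths (the forcing chains). Since $H_n$ is unicyclic, $|E(H_n)|=|V(H_n)|$; counting edges chain by chain then shows that the number of chains equals the number of edges of $H_n$ lying in no chain. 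A vertex of degree $\ge 3$ has degree $\le 2$ inside its chain, so at least one of its incident edges is ``unused''; hence the unused edges form an edge cover of the set $U$ of degree-$3$ vertices of $H_n$. Any edge of $H_n$ covers at most two vertices of $U$, and covers two only if they are consecutive on $C_n$, so a minimum edge cover of $U$ has size $|U|-\nu$, where $\nu=\sum_i\lfloor|U_i|/2\rfloor$ is the maximum matching in $H_n[U]\cong P_{|U_1|}\cup\cdots\cup P_{|U_t|}$ (or in $C_n$ itself when $U=V(C_n)$); that is, $\sum_i\lceil|U_i|/2\rceil$. Thus $|S|\ge\sum_i\lceil|U_i|/2\rceil$, and combined with $|S|\ge 2$ this gives the lower bound.

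\textbf{Upper bound.} I would exhibit a zero forcing set of size $m:=\max\{2,\sum_i\lceil|U_i|/2\rceil\}$. For a segment $\{c_a,\dots,c_b\}$ the idea is to put into $S$ the leaves $\ell_a,\ell_{a+1}$ together with the leaves at every second subsequent vertex of the segment, roughly $\lceil|U_i|/2\rceil$ leaves per segment, and, when $\sum_i\lceil|U_i|/2\rceil<2$ (which happens only for a single segment of size one), to add one cycle vertex so that $|S|=2$. One then checks that the chosen leaves force their cycle vertices, after which the forcing cascades through each segment — a forced cycle vertex forces the next cycle vertex, whose pendant is forced once both of its cycle-neighbours are black — and propagates along the gaps around $C_n$, so that every segment is eventually entered from one side and every remaining pendant is eventually forced.

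The clean part is the lower bound: the key observation is that in a unicyclic graph the number of forcing chains is controlled by an edge-cover problem on the degree-$\ge 3$ vertices. I expect the main obstacle to be the bookkeeping in the upper-bound construction — arranging the choice of $S$ so that $|S|$ is exactly $m$ and the forcing order never stalls (e.g.\ a short segment may contribute no leaf to $S$ and be forced only because cascades reach it from both sides), with the cases $U=\emptyset$, a single segment, and $U=V(C_n)$ handled slightly differently. It is precisely the interaction between the segments and the enclosing cycle that produces the term $\max\{2,\cdot\}$.
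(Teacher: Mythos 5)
You should first be aware that the paper offers no proof of this statement at all: Theorem~\ref{segment} is quoted from Row~\cite{cutvertex} and used as a black box in Corollary~\ref{sdim_nsun}, so there is no in-paper argument to compare yours against; what follows assesses your proposal on its own terms. Your lower bound is correct and essentially complete. The chain decomposition of~\cite{AIM} does partition $V(H_n)$ into $|S|$ paths, the forces use exactly $|V(H_n)|-|S|$ edges, so the unused edges number $|E(H_n)|-|V(H_n)|+|S|=|S|$ because $H_n$ is unicyclic; a degree-three vertex keeps at most two incident edges inside its own chain and no edge incident to it can lie in any other chain, so the unused edges cover $U$; and since an edge of $H_n$ covers two vertices of $U$ only when they are consecutive within a single segment, any such cover has size at least $\sum_i\lceil|U_i|/2\rceil$. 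Combined with $Z(H_n)\geq 2$ (as $H_n$ is not a path), this gives the lower bound cleanly.

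The genuine gap is in the upper bound: the set you describe is too large, so it does not certify the claimed value. Seeding $\ell_a,\ell_{a+1}$ and then every second subsequent leaf costs $\lfloor|U_i|/2\rfloor+1$ vertices in a segment of size $|U_i|$, which exceeds $\lceil|U_i|/2\rceil$ by one whenever $|U_i|$ is even, and repeating the ``double start'' in every segment compounds the excess. Concretely, for two segments of size $2$ the theorem asserts $Z(H_n)=2$, and indeed the adjacent pair $\{\ell_a,\ell_{a+1}\}$ taken in \emph{one} of the two segments already forces all of $H_n$: the two fronts travel around $C_n$ in opposite directions and enter the second segment from both of its ends, which is exactly what unlocks it with no further seeds. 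Your per-segment recipe spends at least four vertices there. Likewise a single segment of size $4$ needs only $\{\ell_a,\ell_{a+1}\}$, not three leaves. The real content of the upper bound is to seed an adjacent pair (or, for a singleton segment, a leaf plus a neighboring cycle vertex) in exactly one segment, to place only $\lceil|U_i|/2\rceil$ suitably positioned leaves in each remaining segment (for even segments, leaving the far end to be finished by the returning front), and to verify that the resulting process never stalls; none of that is pinned down in your sketch, and the cases $U=\emptyset$, a lone singleton segment, and $U=V(C_n)$ each need separate, if easy, treatment. You do flag this as the expected obstacle, but as written the construction proves only $Z(H_n)\le\sum_i\bigl(\lfloor|U_i|/2\rfloor+1\bigr)$, which is strictly weaker than the theorem whenever some segment has even size or $t\ge 2$ with an even segment present.
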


\begin{Crlr}\label{sdim_nsun}
Let $H_n$ be a partial $n$-sun. Then $Z(H_n) \le sdim(H_n)$.
\end{Crlr}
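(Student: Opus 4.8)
The plan is to compute, or at least bound, $sdim(H_n)$ and compare it with the formula for $Z(H_n)$ supplied by Theorem~\ref{segment}. Write $H_n$ as a partial $n$-sun with segments $U_1,\dots,U_t$, and let $U = U_1 \cup \dots \cup U_t \subseteq V(C_n)$ be the set of cycle vertices carrying a leaf, so that $\sigma(H_n)=|U|$. First I would dispose of the degenerate cases: if $|U|\le 2$, then $H_n$ has at most two leaves, $Z(H_n)=2$ by Theorem~\ref{segment}, and one checks directly (e.g.\ using Observation~\ref{strongobservation2} together with the fact that $H_n \ne P_m$, hence $sdim(H_n)\ge 2$ by Theorem~\ref{strongdimthm}(b)) that $sdim(H_n)\ge 2$ as well. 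So assume $|U|\ge 3$.

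The heart of the matter is to show $sdim(H_n) \ge \sum_{i=1}^t \lceil |U_i|/2\rceil$. By Observation~\ref{strongobservation2}, any strong resolving set $W$ of $H_n$ must contain all but one of the $|U|$ leaves, which already gives $sdim(H_n)\ge |U|-1 = \sum_{i=1}^t |U_i| - 1$. This is not quite enough when some segments are short, so the refinement I would pursue is to identify further mutually-maximally-distant pairs that are not among the leaves. The natural candidates are pairs of cycle vertices on "opposite" sides of $C_n$, or a cycle vertex paired with a leaf hanging far away; concretely, for a cycle vertex $u$ antipodal (or nearly antipodal) to a leafed vertex, $u$ is maximally distant from that far leaf. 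Assembling a large set of pairwise MMD vertices — a "strong resolving graph" style argument as in \cite{sdim} — and invoking that a strong resolving set is a vertex cover of the MMD-graph, should push the lower bound up to $\sum_i \lceil |U_i|/2\rceil$. In parallel one wants the upper bound $sdim(H_n)\le \max\{2,\sum_i \lceil |U_i|/2\rceil\}$ by exhibiting an explicit strong resolving set: take all but one leaf and then, for each segment $U_i$, delete from this set roughly $\lfloor |U_i|/2\rfloor$ of the leaves whose host vertices can instead be resolved "through" a geodesic from a surviving leaf, checking that every pair is still strongly resolved.

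The main obstacle I anticipate is precisely the bookkeeping in the lower bound: proving that short segments genuinely force extra vertices into $W$ requires a careful case analysis of which cycle vertices are mutually maximally distant, and this depends delicately on the parity of $n$ and on how the segments $U_i$ are positioned around $C_n$ (two leaves at distance less than the diameter may fail to be MMD in the way one naively hopes). I would isolate this as a lemma: "in a partial $n$-sun, a strong resolving set must contain at least $\lceil |U_i|/2\rceil$ vertices associated with the segment $U_i$," proved by exhibiting $\lceil |U_i|/2\rceil$ pairwise-MMD vertices drawn from the leaves of $U_i$ and the cycle vertices opposite them. Once both inequalities $Z(H_n)=\max\{2,\sum_i\lceil|U_i|/2\rceil\}$ and $sdim(H_n)\ge\sum_i\lceil|U_i|/2\rceil$ (and $sdim(H_n)\ge 2$) are in hand, the corollary $Z(H_n)\le sdim(H_n)$ is immediate.
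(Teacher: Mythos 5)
Your overall strategy---pin down $sdim(H_n)$ segment by segment and compare with Theorem~\ref{segment}---leaves the essential step unproved, and that step is where all the difficulty lives. The lower bound $sdim(H_n)\geq\sum_i\lceil |U_i|/2\rceil$ does not follow from Observation~\ref{strongobservation2} alone precisely when every segment is a singleton (the leaf count then gives only $t-1$ where you need $t$), and the lemma you propose to close the gap (``a strong resolving set must contain at least $\lceil |U_i|/2\rceil$ vertices associated with the segment $U_i$'') is dubious as stated: the extra MMD pairs one finds typically involve cycle vertices roughly antipodal to a segment, so they are not naturally ``associated with'' $U_i$, and apportioning the forced vertices among the segments is exactly the delicate bookkeeping you flag but do not carry out. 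As written, this is a plan with an acknowledged hole at its center rather than a proof.

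The paper avoids all of this with a short global argument worth comparing against. First, the formula of Theorem~\ref{segment} already implies $Z(H_n)\leq\lceil n/2\rceil$: either $U=V(C_n)$ and the single segment contributes exactly $\lceil n/2\rceil$, or the $t\geq 1$ segments are separated cyclically by at least $t$ leafless cycle vertices, whence $\sum_i\lceil |U_i|/2\rceil\leq (|U|+t)/2\leq n/2$ (and the $\max$ with $2$ is harmless since $n\geq 3$). Second, $sdim(H_n)\geq sdim(C_n)=\lceil n/2\rceil$: for each (nearly) antipodal pair of cycle vertices, either that pair or the pair obtained by substituting attached leaves for leafed endpoints is mutually maximally distant in $H_n$, so the MMD structure of $C_n$ persists and forces at least $\lceil n/2\rceil$ vertices into any strong resolving set. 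Chaining these gives $Z(H_n)\leq\lceil n/2\rceil\leq sdim(H_n)$ with no per-segment analysis at all. If you want to salvage your approach, replace your segment-local lower bound with this single global one; it also repairs the all-singleton case, since singleton segments being pairwise separated forces $2t\leq n$ and hence $\lceil n/2\rceil\geq t$.
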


\begin{proof}
The formula in Theorem~\ref{segment} implies that $Z(H_n)\leq \left\lceil\frac{n}{2}\right\rceil$. Considering MMD vertices, it's clear that $sdim(H_n)\geq sdim(C_n)$ and, as noted in~\cite{sdim}, $sdim(C_n)=\lceil\frac{n}{2}\rceil$.~\hfill
\end{proof}

Following \cite{tree-like}, for a given unicyclic graph $G$, a vertex $v \in V(G)$ is called an \emph{appropriate vertex} if at least two components of $G-v$ are paths; a vertex $\ell \in V(G)$ is called a \emph{peripheral leaf} if $\deg_G(\ell)=1$, $\ell u \in E(G)$, and $\deg_G(u) = 2$ (whereas $\deg_G(u) \le 2$ in \cite{tree-like}). The \emph{trimmed form} of $G$ is an induced subgraph obtained by a sequence of deletions of appropriate vertices, isolated paths, and peripheral leaves until no more such deletions are possible. Further, define $sdim(G)=sdim(G_1)+sdim(G_2)$ (\emph{additivity of $sdim$ over disjoint components}), when $G$ is the disjoint union of $G_1$ and $G_2$. This is a natural extension of the (original) definition of $sdim$ for a connected graph; it is needed for the inductive arguments to come.\\

\begin{Rem} \cite{cutvertex} \label{z_trim}
Let $G$ be a unicyclic graph. Then
\begin{itemize}
\item[(a)] for an appropriate vertex $v$ in $G$, $Z(G-v)-1=Z(G)$;
\item[(b)] for an isolated path $P$ in $G$, $Z(G-V(P))+1=Z(G)$;
\item[(c)] for a peripheral leaf $\ell$ in $G$, $Z(G-\ell)=Z(G)$.
\end{itemize}
\end{Rem}

\begin{Lem} \label{sdim_trim}
Let $G$ be a unicyclic graph, and let $\mathcal{C}$ be the unique cycle in $G$.
\begin{itemize}
\item[(a)] If $v$ is an appropriate vertex in $G$ such that $v \not\in V(\mathcal{C})$, then $sdim(G-v)-1\leq sdim(G)$.
\item[(b)] If $P$ is an isolated path in $G$, then $sdim(G-V(P))+1=sdim(G)$.
\item[(c)] If $\ell$ is a peripheral leaf in $G$, then $sdim(G-\ell)=sdim(G)$.
\end{itemize}
\end{Lem}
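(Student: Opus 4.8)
The three parts of Lemma~\ref{sdim_trim} track how $sdim$ changes under the trimming operations, paralleling Remark~\ref{z_trim} for $Z$. The natural tool throughout is the identity $sdim(G)=n(G)-\beta(\mathrm{SR}(G))$ of~\cite{sdim}, where $\mathrm{SR}(G)$ is the "strong resolving graph" on $V(G)$ with edges between MMD pairs and $\beta$ denotes the vertex cover number; equivalently, via König-type vertex cover/independent set duality, $sdim(G)$ equals $n(G)$ minus the independence number of $\mathrm{SR}(G)$. For a unicyclic $G$ this graph is manageable because the only MMD pairs are: pairs of leaves, a leaf and a cycle vertex it is maximally distant from, and a pair of cycle vertices that are diametrically opposite along $\mathcal{C}$ (when applicable). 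I would set this up first, then treat (a), (b), (c) in turn, leaning on Observation~\ref{strongobservation2} and Theorem~\ref{pathcover}(b) where the relevant subgraph is a tree.

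For part (c), deleting a peripheral leaf $\ell$ (with $\ell u\in E(G)$, $\deg_G(u)=2$): the vertex $u$ becomes a new leaf of $G-\ell$, and the MMD structure is essentially preserved—$u$ in $G-\ell$ plays the role $\ell$ played in $G$ with respect to every other vertex (since $d(u,x)=d(\ell,x)-1$ for all $x\neq\ell$, inequalities among distances to $u$ match those to $\ell$). So one exhibits a bijection between minimum strong resolving sets and concludes equality; one must only check $u$ is MMD with the same partners $\ell$ had, which follows from $\deg_G(u)=2$. For part (b), an isolated path $P$ is a path-component of $G$ after stripping appropriate vertices; it is attached to the rest of $G$ at a single cut-vertex, and its "far" endpoint is a leaf of $G$ while its "near" endpoint has degree $2$. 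Both endpoints of $P$ are MMD with leaves elsewhere, and among the pair of endpoints only one need be in a strong resolving set; additivity of $sdim$ over components and a careful accounting of which endpoint contributes should yield $sdim(G-V(P))+1=sdim(G)$. Here I would argue: removing $V(P)$ removes exactly the edges of $\mathrm{SR}(G)$ incident to $P$'s endpoints, which form (after the removal of the internal path vertices, which are never MMD with anything) a structure costing exactly one vertex to cover.

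For part (a)—the inequality $sdim(G-v)-1\le sdim(G)$ for an appropriate vertex $v\notin V(\mathcal C)$—the cleanest route is again via leaves: when we delete an appropriate vertex $v$, at least two of the components of $G-v$ are paths, but $G-v$ is a forest-plus-cycle; deleting $v$ can decrease the leaf count, and merge or split the MMD graph. I would bound $\sigma(G-v)\ge\sigma(G)-k$ for the appropriate $k$ and combine with Observation~\ref{strongobservation2}, or more directly compare minimum strong resolving sets: a strong resolving set $W$ of $G$, restricted to $V(G-v)$ and augmented by at most one vertex (to cover any MMD pair newly created at the "stub" left behind), gives a strong resolving set of $G-v$, whence $sdim(G-v)\le sdim(G)+1$. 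The condition $v\notin V(\mathcal C)$ guarantees the cycle survives intact in $G-v$, so no new long geodesics through the cycle appear and the only change is local to the neighborhood of $v$.

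**Expected main obstacle.** The delicate point is part (a): controlling exactly how the MMD relation changes when an appropriate vertex is removed. Removing $v$ can turn an interior degree-two vertex into an exterior one, create a new leaf, or expose a previously-dominated cycle vertex as maximally distant, and each of these perturbs $\mathrm{SR}(G)$ in a different way; the inequality (rather than equality) signals that the bookkeeping is not tight, so I expect to need a slightly generous argument—exhibiting an explicit strong resolving set of $G-v$ of size at most $sdim(G)+1$ by taking a minimum strong resolving set of $G$, discarding $v$ if present, and adding back at most one well-chosen vertex near the deletion site—rather than a clean bijective correspondence. Verifying that this added vertex genuinely covers all newly-created MMD pairs, using $v\notin V(\mathcal C)$ to rule out cycle-diameter complications, is where the real work lies.
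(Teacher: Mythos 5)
Your plan correctly identifies the easy parts: for (c) your observation that $u$ inherits exactly the MMD-partners of $\ell$ (because $d(u,x)=d(\ell,x)-1$ for all $x\ne\ell$, and $u$ itself is maximally distant from nothing in $G$) is the paper's proof verbatim. But two things go wrong elsewhere. First, in (b) you have misread the definition: an \emph{isolated path} here is a connected \emph{component} that is a path (this is why the paper extends $sdim$ additively over disjoint components), not a pendant path attached at a cut-vertex. With the correct reading, (b) is immediate from $sdim(P_n)=1$ and additivity; your discussion of endpoints being MMD with leaves ``elsewhere'' is answering a different question. Relatedly, your opening identity is garbled: the Oellermann--Peters-Fransen result is $sdim(G)=\beta(\mathrm{SR}(G))=n-\alpha(\mathrm{SR}(G))$, not $n-\beta(\mathrm{SR}(G))$.

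The genuine gap is in (a), and you have in fact located it yourself and then declined to fill it. Writing $G-v=G_1+T_1+\cdots+T_k$ with $\mathcal{C}\subseteq G_1$ and $u$ the neighbor of $v$ in $G_1$, the statement ``restrict a minimum strong resolving set $S$ to $V(G-v)$ and add one vertex'' only works cleanly when every leaf of $G-G_1$ already lies in $S$: then $S\cup\{u\}$ works because any geodesic from such a leaf into $G_1$ passes through $u$. The hard case, which your proposal does not address, is when one leaf $\ell_0$ of the detached tree part is \emph{absent} from $S$ (Observation~\ref{strongobservation2} permits exactly one). The danger is not ``newly created MMD pairs at the stub'': it is that a pair $x,y$ on the cycle may have been strongly resolved in $G$ only by leaves now sitting in other components, and no single added vertex obviously recovers all such pairs. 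The paper handles this with a separate Claim --- that $S\cap V(G_1)$ already strongly resolves $G_1$ --- proved by producing a vertex $w\in S$ with $w$ MMD $\ell_0$ whose projection $w'$ onto $\mathcal{C}$ is diametrically opposite the projection $u'$ of $u$, and arguing that $x$ and $y$ must lie on a common semicircle determined by $u'$ and $w'$, so that a $w$--$x$ or $w$--$y$ geodesic resolves them. This semicircle argument is the substance of part (a), and your proposal stops exactly where it begins.
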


\begin{proof} Let $\mathcal{M}_H(x)=\{y\in V(H): y \mbox{ MMD }x\}$.\\

(a) Denote the connected components of $G-v$ by $G_1$ (with $\mathcal{C}\subseteq G_1$) and $T_1,\ldots T_k$ ($k\geq 2$), of which $T_1$ and $T_2$ (and possibly more trees) are isolated paths; let $u$ denote the sole neighbor of $v$ in $V(G_1)$. Let $S$ be a minimum strong resolving set of $G$. \underline{Let $L$ denote the set of leaves in $G-G_1$.} By Observation~\ref{strongobservation2}, $0 \le |L-S| \le 1$. If $|L-S|=0$, then $S \cup \{u\}$ forms a strong resolving set for $G-v$, since a geodesic between any $\ell\in L$ and any $x\in V(G_1)$ necessarily passes through $u$; thus we have $sdim(G-v)-1\leq sdim(G)$. So, suppose $|L-S|=1$. Since $L$ strongly resolves the complement of $G_1$ in $G-v$, it suffices to prove the following  \\

\textbf{Claim.} $S\cap V(G_1)$ strongly resolves $G_1$.\\

\textit{Proof of Claim.} Let $\ell_0\in L-S$. Let $x, y \in V(G_1)$ be strongly resolved by $\ell\in L\cap S$; we will show that $x$ and $y$ are strongly resolved by some $z\in S \cap V(G_1)$. If $x$ or $y$, say $x$, does not lie on $\mathcal{C}$, then there must exist a leaf $\ell' \in V(G_1)\cap S$ which strongly resolves $x$ and $y$, and we are done. So, suppose both $x$ and $y$ lie on $\mathcal{C}$. Let $u'$ denote the vertex on $\mathcal{C}$ which is closest to $u$. There must exist a $w \in V(G_1)$ satisfying $w$ MMD $\ell_0$ and such that $d(u',w')$ equals the diameter of $\mathcal{C}$; here $w'$ denotes the vertex  on $\mathcal{C}$ which is closest to $w$. This $w$ lies in $S$, since $\ell_0\notin S$. Notice that $x$ and $y$ together lie on the same one of the two semi-circles defined by $u'$ and $w'$; otherwise, $u'-x$ geodesic does not contain $y$ and $u'-y$ geodesic does not contain $x$; the relevance here being that a geodesic from $\ell\in L$ to either $x$ or $y$ must pass through $u'$. Thus, without loss of generality, we may assume a $u'-y$ geodesic contains $x$. Then, a $w'-x$ geodesic, hence also a $w-x$ geodesic, contains $y$. It follows that $w\in S\cap V(G_1)$ strongly resolves $x$ and $y$.~$\Box$\\

(b) This follows from the fact $sdim(P)=1$ and the additivity of $sdim$ over disjoint components.\\

(c)  Since $\ell$ is a peripheral leaf in $G$, there exists a vertex $u \in V(G)$ such that $\ell u \in E(G)$ with $\deg_G(u) = 2$. Let $G'=G-\ell$. Since $\mathcal{M}_{G}(u)=\emptyset$ and $\mathcal{M}_{G'}(u)=\mathcal{M}_{G}(\ell)$, $sdim(G-\ell)=sdim(G)$.~\hfill
\end{proof}

\begin{Rem}
Let $G$ be a unicyclic graph, and let $\mathcal{C}$ be the unique cycle of $G$.
\begin{itemize}
\item[(a)] For an appropriate vertex $v \in V(G)$, $sdim(G)-sdim(G-v)$ can be arbitrarily large. If $G$ is a unicyclic graph as in (a) of Figure \ref{c3_rem}, then $sdim(G)=\lceil\frac{n}{2}\rceil+k-1$ and $sdim(G-v)=k+1$.
\item[(b)] There exists $G$ such that, for an appropriate vertex $v \in V(\mathcal{C})$, $sdim(G-v)=sdim(G)+2$. If $G$ is a unicyclic graph as in (b) of Figure \ref{c3_rem}, then $sdim(G)=6$ (the solid vertices form a minimum strong resolving set of $G$) and $sdim(G-v)=8$.
\end{itemize}
\end{Rem}

\begin{figure}[htpb]
\begin{center}
\scalebox{0.5}{\input{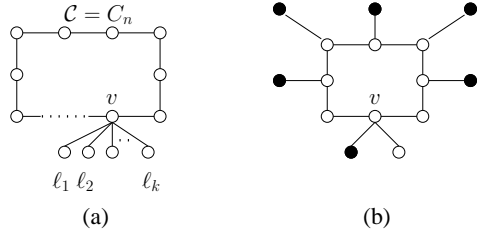}} \caption{Unicyclic graph $G$ and an appropriate vertex $v \in V(G)$}\label{c3_rem}
\end{center}
\end{figure}

\begin{Lem}\label{gen-part-n-sun}
Let $H$ be a \textbf{generalized} partial $n$-sun. Then $Z(H) \le sdim(H)$.
\end{Lem}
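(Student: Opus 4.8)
The plan is to compute both sides of the inequality in closed form for a generalized partial $n$-sun $H$, in terms of $n$ and the leaf-attachment data, and then compare. Write $V(\mathcal{C}) = \{c_1, \ldots, c_n\}$ for the cycle vertices of $H$, and let $a_i \ge 0$ be the number of leaves attached at $c_i$; set $A = \sum_{i=1}^n a_i$ (the total number of leaves) and let $U = \{i : a_i \ge 1\}$ be the set of ``loaded'' cycle positions. The key structural observation is that $H$ has $A$ leaves, all of which are mutually maximally distant from one another, so by Observation~\ref{strongobservation2} we get $sdim(H) \ge A - 1$; moreover any two leaves attached at the same $c_i$ behave identically, and in fact one can argue (as in the partial-sun case, via MMD vertices and the fact that deleting all but one leaf at each loaded vertex turns $H$ into a partial $n$-sun $H_n$) that $sdim(H) \ge sdim(H_n) + (A - |U|)$, reducing the lower bound for $sdim(H)$ to a lower bound for $sdim$ of an ordinary partial $n$-sun.

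**First** I would nail down $Z(H)$. The cycle vertices that carry leaves are cut-vertices; applying Remark~\ref{z_trim}(c) repeatedly, deleting peripheral leaves until each loaded $c_i$ has exactly one pendant does not change $Z$, so $Z(H) = Z(H_n)$ where $H_n$ is the partial $n$-sun with loaded set $U$. (One must check that the extra leaves really are peripheral leaves in the sense defined here — this needs $\deg(u) = 2$ for the support vertex, which holds precisely because in the reduced graph only one leaf remains at each site; so the deletions must be done one leaf at a time, reducing the multiplicity at a given $c_i$ from $a_i$ down to $1$, and only the last deletion at each site is blocked.) Then Theorem~\ref{segment} gives $Z(H_n) = \max\{2, \sum_j \lceil |U_j|/2 \rceil\}$ where the $U_j$ are the segments (maximal runs of consecutive loaded vertices) of $H_n$, and in particular $Z(H) = Z(H_n) \le \lceil n/2 \rceil$.

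**Next**, for the lower bound on $sdim(H)$ I would combine the reduction above with Corollary~\ref{sdim_nsun}: we have $Z(H) = Z(H_n) \le sdim(H_n)$, and $sdim(H) \ge sdim(H_n) + (A - |U|) \ge sdim(H_n) \ge Z(H_n) = Z(H)$. If the multiplicity-reduction bound $sdim(H) \ge sdim(H_n) + (A-|U|)$ is more than I want to prove, there is a cruder route that still works: $sdim(H) \ge A - 1$ by Observation~\ref{strongobservation2}, while $Z(H) = Z(H_n) = \max\{2, \sum_j\lceil|U_j|/2\rceil\} \le \max\{2, |U|\} \le \max\{2, A\}$, and for $A \ge 3$ this already gives $Z(H) \le A \le sdim(H) + 1$ — not quite tight, so one really does need the sharper comparison. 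The cleanest form is: deleting surplus leaves preserves $Z$ exactly (Remark~\ref{z_trim}(c)) and cannot increase $sdim$ by more than the number deleted, actually here it decreases $sdim$ by exactly that number by an argument parallel to Lemma~\ref{sdim_trim}(c) — each surplus leaf $\ell$ at $c_i$ has $\mathcal{M}_H(\ell) = \{$other leaves$\}$ and removing it drops $sdim$ by one — so $sdim(H) - sdim(H_n) = A - |U|\ge 0$, and then $Z(H) = Z(H_n) \le sdim(H_n) \le sdim(H)$ by Corollary~\ref{sdim_nsun}.

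**The main obstacle** I anticipate is the bookkeeping in the reduction from a generalized partial sun to an ordinary partial sun: verifying that the iterated deletion of surplus leaves legitimately invokes Remark~\ref{z_trim}(c) (the ``peripheral leaf'' hypothesis $\deg(u)=2$ is delicate and only holds on the last leaf at each site — but the Remark is applied to each intermediate graph in turn, and in each such graph the leaf being removed is indeed attached to a degree-$2$ vertex as long as that vertex still carries $\ge 2$ leaves — wait, no: a cycle vertex with two leaves has degree $4$, so these are \emph{not} peripheral leaves in the strict sense), which means I may instead need the more robust observation that appending or deleting a leaf at a vertex that already has another leaf changes neither $Z$ (a twin-leaf duplication argument: the duplicated leaf is forced by the same vertex that forces its twin, and symmetrically it can be dropped from any zero forcing set) nor, in the parallel way, the relation for $sdim$. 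Packaging this ``twin leaf'' lemma cleanly — rather than leaning on Remark~\ref{z_trim}(c), which does not literally apply — is the real content; once it is in place, the chain $Z(H) = Z(H_n) \le sdim(H_n) \le sdim(H)$ closes the proof.
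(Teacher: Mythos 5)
There is a genuine error on the $Z$-side of your reduction: the claimed equality $Z(H)=Z(H_n)$ is false, and the ``twin-leaf duplication argument'' offered for it misreads the color-change rule. A black vertex forces a neighbor only when that neighbor is its \emph{unique} white neighbor, and each vertex performs at most one force over the whole process; hence if a cycle vertex $c_i$ carries $a_i\ge 2$ pendants, at most one of them can ever be forced (necessarily by $c_i$), so any zero forcing set must contain at least $a_i-1$ of them. Concretely, let $H$ be $C_4$ with two pendants attached to each cycle vertex: then $Z(H)=4$ (at least one pendant per cycle vertex must be black initially, and choosing one pendant at each cycle vertex suffices), whereas the underlying partial $4$-sun $H_4$ has $Z(H_4)=\max\{2,\lceil 4/2\rceil\}=2$ by Theorem~\ref{segment}. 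So deleting surplus pendants does \emph{not} preserve $Z$; it can lower $Z$ by as much as $A-|U|$. (You correctly noticed that Remark~\ref{z_trim}(c) does not apply because the support vertex has degree $2+a_i\ge 4$, but the replacement ``twin-leaf'' lemma you substitute for it is false for $Z$, even though its analogue for $sdim$ is fine.)

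The argument is salvageable, and the repaired version is essentially the paper's proof: each surplus pendant raises $Z$ by \emph{at most} one (adjoin the new leaf itself to a zero forcing set of the smaller graph) and raises $sdim$ by \emph{exactly} one (the new leaf is a twin of an existing leaf at that vertex), so $Z(H)\le Z(H_n)+(A-|U|)\le sdim(H_n)+(A-|U|)= sdim(H)$, the middle step being Corollary~\ref{sdim_nsun}. The paper packages this as an induction, building $H$ up from a maximal partial $n$-sun $H^0$ by attaching one pendant at a time at a major vertex and verifying $sdim(H^{i+1})=sdim(H^i)+1\ge Z(H^i)+1\ge Z(H^{i+1})$. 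You have already derived the $sdim$ half of this ledger, namely $sdim(H)-sdim(H_n)=A-|U|$; the fix is to spend that surplus on the $Z$ side rather than asserting that $Z$ is unchanged.
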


\begin{proof}
It's clear that our claim holds for a $H$ which has only one major vertex. Thus, we may assume that $H$ contains at least two major vertices.
Let $H^0$ be a maximal partial $n$-sun contained in $H$; then $Z(H^0)\leq sdim(H^0)$ by Cororllary~\ref{sdim_nsun}. For $i\geq 0$, let $H^{i+1}$ denote the graph obtained as the vertex sum of a $P_2$ with $H^i$ at a major vertex of $H^i$, so that $H=H^k$ for some $k\geq 0$. By the choice of $H^0$, we have $sdim(H^{i+1})=sdim(H^i)+1\geq Z(H^{i})+1\geq Z(H^{i+1})$ for each $0\leq i\leq k-1$, where the left inequality is given by the induction hypothesis.~\hfill
\end{proof}

Now, we arrive at our main result.

\begin{Thm}
If $G$ is a unicyclic graph, then $Z(G) \le sdim(G)$.
\end{Thm}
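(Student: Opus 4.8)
The plan is to prove the inequality by strong induction on $|V(G)|$, using additivity of $Z$ and of $sdim$ over connected components together with the convention $Z(P_1)=sdim(P_1)=1$. If $G$ is a generalized partial $n$-sun (in particular if $G=C_n$), then $Z(G)\le sdim(G)$ already holds by Lemma~\ref{gen-part-n-sun}; since the smallest unicyclic graph $C_3$ is a generalized partial $n$-sun, this secures the base of the induction, and more importantly it lets me assume from now on that $G$ is unicyclic with unique cycle $\mathcal{C}$ and is \emph{not} a generalized partial $n$-sun, which is the same as saying $G$ has a vertex off $\mathcal{C}$ of degree at least two.

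The crux — and the step I expect to be the main obstacle — is to show that such a $G$ always admits a trimming move that is \emph{not} the deletion of an appropriate vertex lying on $\mathcal{C}$; this restriction matters because Lemma~\ref{sdim_trim}(a) controls $sdim(G-v)$ only for appropriate $v\notin V(\mathcal{C})$, and the Remark immediately after Lemma~\ref{sdim_trim} exhibits graphs where that bound fails once $v\in V(\mathcal{C})$. To produce such a move, I would choose a non-leaf vertex $w\notin V(\mathcal{C})$ at maximum distance from $\mathcal{C}$. Since $G$ is unicyclic, $w$ has exactly one neighbor on a shortest path from $w$ to $\mathcal{C}$, and by the maximality of $d(w,\mathcal{C})$ every other neighbor of $w$ is a leaf. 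If $w$ has at least two leaf-neighbors, then at least two components of $G-w$ are (trivial) paths, so $w$ is an appropriate vertex with $w\notin V(\mathcal{C})$; if $w$ has exactly one leaf-neighbor $\ell$, then $\deg_G(w)=2$ and $\ell$ is a peripheral leaf.

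It remains to run the induction through the two cases. If $\ell$ is a peripheral leaf, then $G-\ell$ is a unicyclic graph of smaller order, and $Z(G)=Z(G-\ell)\le sdim(G-\ell)=sdim(G)$ by Remark~\ref{z_trim}(c), the induction hypothesis, and Lemma~\ref{sdim_trim}(c). If $w$ is an appropriate vertex off $\mathcal{C}$, then $G-w$ is the disjoint union of a unicyclic graph $G_1$ (the component meeting $\mathcal{C}$), which has smaller order, and a forest; applying the induction hypothesis to $G_1$, Theorem~\ref{sdZtree} componentwise to the forest, and additivity of $Z$ and of $sdim$, one gets $Z(G-w)\le sdim(G-w)$, and therefore $Z(G)=Z(G-w)-1\le sdim(G-w)-1\le sdim(G)$ by Remark~\ref{z_trim}(a) and Lemma~\ref{sdim_trim}(a). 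Every move strictly lowers the order, so the process terminates. Apart from the structural claim of the previous paragraph, the only delicate point is keeping track of the trivial $P_1$ components created by removing an appropriate vertex, and this is precisely why it is convenient to declare $Z(P_1)=sdim(P_1)=1$ so that they contribute identically to the two sides.
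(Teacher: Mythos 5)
Your proof is correct and takes essentially the same route as the paper's: trim $G$ down to a generalized partial $n$-sun while never deleting a cycle vertex, controlling both parameters at each step via Remark~\ref{z_trim}, Lemma~\ref{sdim_trim}, Theorem~\ref{sdZtree}, and Lemma~\ref{gen-part-n-sun}. The only differences are presentational (induction on order rather than descent by contradiction), plus the welcome extra detail that a legal trimming move always exists when $G$ is not yet a generalized partial $n$-sun and the explicit convention for trivial $P_1$ components --- two points the paper passes over in silence.
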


\begin{proof}
Assume $Z(G)>sdim(G)$ for some unicyclic graph $G$.  By trimming as much as possible, but NOT trimming at any vertex lying on the unique cycle $\mathcal{C}$ of $G$, we arrive at a generalized partial $n$-sun $H\subseteq G$.
We descend from the given $G$ to $H$ by, for each trim at an allowed vertex $x$ of $G'$, discarding all components of $G'-x$ except the connected component $G''$ containing $\mathcal{C}$. Let $G'-x=G''+T_1+\ldots +T_m$, where $+$ denotes disjoint union. Remark~\ref{z_trim} and Lemma~\ref{sdim_trim} imply $Z(G''+T_1+\ldots +T_m)>sdim(G''+T_1+\ldots +T_m)$ which, by the additivity of both $Z$ and $sdim$, is equivalent to
\begin{equation}\label{descent}
Z(G'')+\sum_{i=1}^{m}Z(T_i)>sdim(G'')+\sum_{i=1}^{m}sdim(T_i).
\end{equation}
Since $Z(T_i)\leq sdim(T_i)$ for each tree $T_i$ by Theorem~\ref{sdZtree}, inequality (\ref{descent}) implies $Z(G'')>sdim(G'')$. Through this process of ``descent",  we eventually reach $Z(H)>sdim(H)$, which is the desired contradiction to Lemma~\ref{gen-part-n-sun}.~\hfill
\end{proof}

\begin{Rem}
There exists a unicyclic graph $G$ satisfying $Z(G)=sdim(G)$. For an odd integer $k \ge 3$, let $G$ be a partial $2k$-sun with the unique cycle $\mathcal{C}$ given by $u'_1u_2u'_3u_4 \ldots u'_{2k-1}u_{2k}$ such that $ter(u_{2j})=0$ and $ter(u'_{2j-1})=1$, where $1 \le j \le k$ (see Figure \ref{z=sd}). Then $Z(G)=k$ by Theorem \ref{segment}, and $sdim(G)=k$: (i) $sdim(G) \ge k$ since $u_j$ MMD $u_{j+k}$ for each $j \in \{1, 2, \ldots k\}$; (ii) $sdim(G) \le k$ since  $\{u_{2j-1} \mid 1 \le j \le k\}$ forms a strong resolving set for $G$.
\end{Rem}

\begin{figure}[htpb]
\begin{center}
\scalebox{0.5}{\input{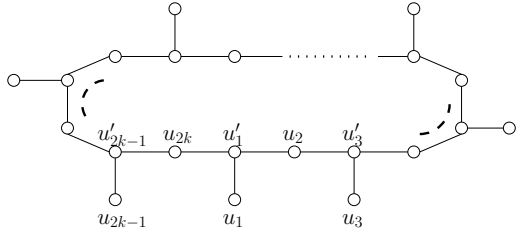}} \caption{Unicyclic graphs $G$ with $Z(G)=sdim(G)$}\label{z=sd}
\end{center}
\end{figure}


\section{A concluding thought}

The cycle rank $r(G)$ of a connected graph $G$ is defined as $|E(G)|-|V(G)|+1$. In the preceding sections, we have provided \emph{sharp} bounds (relating $Z(G)$ and $sdim(G)$) when $r(G)$ equals $0$ or $1$; now, we offer a rough bound which, notably, places no restriction on $r(G)$.\\

\begin{Prop}
Let $G$ be a connected graph with cycle rank $r(G)$. Then $Z(G)\leq sdim(G)+3\cdot r(G)$.
\end{Prop}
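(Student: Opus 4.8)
The plan is to route the comparison through the leaf count $\sigma$ rather than trying to track how $sdim$ behaves under edge insertions. I would assemble four ingredients: (i) the edge-addition stability $Z(H+e)\le Z(H)+1$ from Theorem~\ref{strongdimthm}(a); (ii) the tree inequality $Z(T)\le sdim(T)=\sigma(T)-1$, which is Theorem~\ref{sdZtree} together with Theorem~\ref{pathcover}(b); (iii) the elementary fact that inserting one edge destroys at most two leaves; and (iv) the universal lower bound $sdim(G)\ge\sigma(G)-1$ from Observation~\ref{strongobservation2}.

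Concretely, first I would fix a spanning tree $T$ of $G$, so that $E(G)\setminus E(T)=\{e_1,\dots,e_r\}$ with $r=r(G)$. Writing $G_0=T$ and $G_i=G_{i-1}+e_i$, every $G_i$ is connected (it contains $T$) and $G_r=G$, so $r$ successive applications of Theorem~\ref{strongdimthm}(a) give $Z(G)\le Z(T)+r$. Next I would bound the two ends: by Theorem~\ref{sdZtree} and Theorem~\ref{pathcover}(b), $Z(T)\le sdim(T)=\sigma(T)-1$; and since adding an edge $uv$ raises only $\deg u$ and $\deg v$, each by one, and (all graphs here being connected) creates no new leaf, each step lowers $\sigma$ by at most $2$, so $\sigma(G)\ge\sigma(T)-2r$. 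Finally Observation~\ref{strongobservation2} gives $sdim(G)\ge\sigma(G)-1$. Chaining these,
\[
Z(G)\le Z(T)+r\le(\sigma(T)-1)+r\le(\sigma(G)+2r-1)+r=(\sigma(G)-1)+3r\le sdim(G)+3r(G),
\]
which is the assertion.

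As for difficulty, there is no genuinely hard step; the only real choice is to compare through $\sigma$ and to exploit that $sdim(T)=\sigma(T)-1$ holds \emph{exactly} for trees. The tempting alternative — an induction on $r(G)$ deleting one cycle-edge at a time — would need a general inequality $sdim(G-e)\le sdim(G)+2$ for a cycle-edge $e$, which is only available here when $G-e$ is a tree (Proposition~\ref{unicyclic}); so I would avoid that route in favor of the spanning-tree argument above.
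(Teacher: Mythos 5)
Your proof is correct and is essentially identical to the paper's: both fix a spanning tree $T$, apply Theorem~\ref{strongdimthm}(a) $r$ times to get $Z(G)\le Z(T)+r$, pass through $Z(T)\le sdim(T)=\sigma(T)-1$, bound $\sigma(T)\le\sigma(G)+2r$ via the at-most-two-leaves-per-edge observation, and finish with $\sigma(G)-1\le sdim(G)$ from Observation~\ref{strongobservation2}. Your closing remark about why the edge-deletion induction would be harder is a sensible aside but not needed.
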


\begin{proof}
Let $T$ be a spanning tree of $G$ obtained through the deletion of $r=r(G)$ edges of $G$. We have $Z(G)\leq Z(T)+r\leq sdim(T)+r$, where the left and right inequalities are respectively given by Theorem~\ref{strongdimthm}(a) and Theorem~\ref{sdZtree}. Since the removal of an edge $e$ from $G$ results in at most two more leaves in $G-e$, we have $\sigma(T)\leq 2r+\sigma(G)$. Since $sdim(T)=\sigma(T)-1$ by Theorem~\ref{pathcover}(b), we have $Z(G)\leq 2r+\sigma(G)-1+r$. Since $\sigma(G)-1\leq sdim(G)$ by Observation~\ref{strongobservation2}, we obtain $Z(G)\leq sdim(G)+3r$.~\hfill
\end{proof}

\noindent \textbf{Question.} What is the best $k$  such that $Z(G) \le sdim(G)+k\cdot r(G)$ for any connected graph $G$?\\

We conjecture $0<k<1$, as suggested by the following example.\\

\begin{Eg}
Let $G=P_s \square P_s$ be the Cartesian product of $P_s$ with itself, where $s \ge 2$. Then $Z(G)=s$ (see~\cite{AIM}) and $sdim(G)=2$. Notice that $r(G)=(s-1)^2$. So, $Z(G)=sdim(G)+\frac{s-2}{(s-1)^2} r(G)$. See Figure~\ref{33} when $s=3$, where the solid vertices in Figure~\ref{33}(a) form a minimum zero forcing set for $G$ and the solid vertices in Figure~\ref{33}(b) form a minimum strong resolving set for $G$.
\end{Eg}

\begin{figure}[htpb]
\begin{center}
\scalebox{0.5}{\input{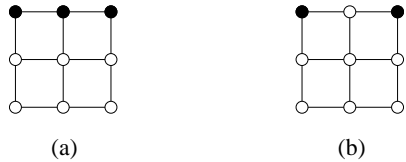}} \caption{$Z(P_3 \square P_3)=3$ and $sdim(P_3 \square P_3)=2$}\label{33}
\end{center}
\end{figure}



\begin{thebibliography}{99}

\bibitem{AIM} F. Barioli, W. Barrett, S. Butler, S.M. Cioab\u{a}, D. Cvetkovi\'{c}, S.M. Fallat, C. Godsil, W. Haemers, L. Hogben, R. Mikkelson, S. Narayan, O. Pryporova, I. Sciriha, W. So, D. Stevanovi\'{c}, H. van der Holst, K. Vander Meulen and A.W. Wehe (AIM Minimum Rank-Special Graphs Work Group), Zero forcing sets and the minimum rank of graphs. {\it Linear Algebra Appl.} {\bf{428}} (2008) 1628-1648.

\bibitem{tree-like} F. Barioli, S. Fallat and L. Hogben, On the difference between the maximum multiplicity and path cover number for tree-like graphs. \emph{Linear Algebra Appl.} \textbf{409} (2005) 13-31.

\bibitem{p1} D. Burgarth and V. Giovannetti, Full control by locally induced relaxation. \emph{Phys. Rev. Lett.} \textbf{99} (2007) 100501.

\bibitem{p2} D. Burgarth and K. Maruyama, Indirect Hamiltonian identification through a small gateway. \emph{New J. Phys.} \textbf{11} (2009) 103019.

\bibitem{CEJO} G. Chartrand, L. Eroh, M.A. Johnson and O.R. Oellermann, Resolvability in graphs and the metric dimension of a graph. \textit{Discrete Appl. Math.} 
\textbf{105} (2000) 99-113.

\bibitem{iteration} K. Chilakamarri, N. Dean, C.X. Kang and E. Yi, Iteration index of a zero forcing set in a graph. \textit{Bull. Inst. Combin. Appl.} {\bf{64}} (2012) 57-72.

\bibitem{Z+e} C.J. Edholm, L. Hogben, M. Huynh, J. LaGrange and D.D. Row, Vertex and edge spread of zero forcing number, maximum nullity, and minimum rank of a graph. \textit{Linear Algebra Appl.} \textbf{436} (2012) 4352-4372.

\bibitem{dimZ} L. Eroh, C.X. Kang and E. Yi, A comparison between the metric dimension and zero forcing number of trees and unicyclic graphs. \textit{arXiv:1408.5943}.

\bibitem{dimZ2} L. Eroh, C.X. Kang and E. Yi, Metric dimension and zero forcing number of two families of line graphs. \textit{Math. Bohem.}, to appear.

\bibitem{NPcompleteness} M.R. Garey and D.S. Johnson, \emph{Computers and intractability: A guide to the theory of NP-completeness.} Freeman, New York (1979).

\bibitem{HM} F. Harary and R.A. Melter, On the metric dimension of a graph. \textit{Ars Combin.} {\bf{2}} (1976) 191-195.

\bibitem{proptime} L. Hogben, M. Huynh, N. Kingsley, S. Meyer, S. Walker and M. Young, Propagation time for zero forcing on a graph. \textit{Discrete Appl. Math.} \textbf{160} (2012) 1994-2005.

\bibitem{pzf} C.X. Kang and E. Yi, Probabilistic zero forcing in graphs. \textit{Bull. Inst. Combin. Appl.} \textbf{67} (2013) 9-16.

\bibitem{fracsdim} C.X. Kang and E. Yi, The fractional strong metric dimension of graphs. COCOA'13, \textit{Lecture Notes in Comput. Sci.} \textbf{8287} (2013) 84-95.

\bibitem{landmarks} S. Khuller, B. Raghavachari and A. Rosenfeld, Landmarks in graphs. \textit{Discrete Appl. Math.} \textbf{70} (1996) 217-229.

\bibitem{sdim} O.R. Oellermann and J. Peters-Fransen, The strong metric dimension of graphs and digraphs. \textit{Discrete Appl. Math.} \textbf{155} (2007) 356-364.

\bibitem{cutvertex} D.D. Row, A technique for computing the zero forcing number of a graph with a cut-vertex. \emph{Linear Algebra Appl.} \textbf{436} (2012) 4423-4432.

\bibitem{MathZ} A. Seb\"{o} and E. Tannier, On metric generators of graphs. \emph{Math. Oper. Res.} \textbf{29} (2004) 383-393.

\bibitem{p3} S. Severini, Nondiscriminatory propagation on trees. \emph{J. Phys. A: Math. Theor.} \textbf{41} (2008) 482002.

\bibitem{Slater} P.J. Slater, Leaves of trees. \textit{Congr. Numer.} {\bf{14}} (1975) 549-559.

\bibitem{sdimGbar} E. Yi, On strong metric dimension of graphs and their complements. \textit{Acta Math. Sin. (Engl. Ser.)} \textbf{29} (2013) 1479-1492.

\end{thebibliography}
\end{document}